\documentclass[11pt,final]{amsart}

\usepackage[
  a4paper,
  margin=2.5cm
]{geometry}
\usepackage[T1]{fontenc}
\usepackage[utf8]{inputenc}
\usepackage{lmodern}
\usepackage{microtype}
\usepackage{mathtools}
\usepackage{amssymb}
\usepackage{mathrsfs}
\usepackage{bm}
\usepackage{enumitem}
\usepackage{booktabs}
\usepackage{xcolor}
\usepackage{hyperref}
\usepackage{aliascnt}
\usepackage[nameinlink,capitalize,noabbrev]{cleveref}

\hypersetup{
colorlinks=true,
linkcolor=black
}

\numberwithin{equation}{section}

\newcommand{\Sph}{\mathbb{S}}
\newcommand{\dd}{\,\mathrm{d}}

\newcommand{\cH}{\mathcal{H}}
\newcommand{\cD}{\mathcal{D}}

\newcommand{\cU}{\mathcal{U}}
\newcommand{\ip}[2]{\left\langle #1,#2\right\rangle}
\DeclareMathOperator{\diver}{div}
\DeclareMathOperator{\spanop}{span}
\DeclareMathOperator{\spec}{spec}

\newtheorem{theorem}{Theorem}[section]
\newaliascnt{proposition}{theorem}
\newtheorem{proposition}[proposition]{Proposition}
\aliascntresetthe{proposition}
\newaliascnt{lemma}{theorem}
\newtheorem{lemma}[lemma]{Lemma}
\aliascntresetthe{lemma}
\newaliascnt{corollary}{theorem}
\newtheorem{corollary}[corollary]{Corollary}
\aliascntresetthe{corollary}
\theoremstyle{definition}
\newaliascnt{definition}{theorem}

\aliascntresetthe{definition}
\theoremstyle{remark}
\newaliascnt{remark}{theorem}
\newtheorem{remark}[remark]{Remark}
\aliascntresetthe{remark}

\title[Complete spectrum at a Sobolev extremal]
{The complete spectrum of the linearized \(p\)-Laplacian\\
at a Sobolev extremal}
\author{Yitian Zhang}
\date{}
\subjclass[2020]{35P05, 35J92, 33C45, 46E35}
\keywords{linearized \(p\)-Laplacian, Sobolev extremal, Jacobi
polynomials, complete spectrum, spherical harmonics}

\begin{document}

\begin{abstract}
Let \(1<p<n\), and let
\[
 v(x)=\left(1+|x|^{p/(p-1)}\right)^{-(n-p)/p}
\]
be the standard radial extremal for the sharp Sobolev inequality. We determine all eigenvalues and eigenspaces of the linearized \(p\)-Laplacian at \(v\), understood as the self-adjoint operator defined by its closed quadratic form in \(L^2(\mathbb{R}^n,v^{p^*-2}\,\mathrm{d}x)\).  After decomposition into spherical harmonics, we use an explicit gauge transformation and a change of variables to identify each radial block, at the level of closed quadratic forms, with a shifted Jacobi operator.  This yields a complete eigenbasis indexed by \((\ell,k)\in\mathbb{N}_0^2\), where \(\ell\) is the angular degree and \(k\) is the radial mode number.
\end{abstract}

\maketitle

\section{Introduction}

For \(1<p<n\), let
\[
q=p^*\coloneq \frac{np}{n-p}.
\]
The sharp Sobolev inequality on \(\mathbb{R}^n\) states that
\begin{equation}\label{eq:sharp-sobolev}
  S_{n,p}
  \left(
      \int_{\mathbb{R}^n}|u|^q\,\dd x
  \right)^{p/q}
  \leq
  \int_{\mathbb{R}^n}|\nabla u|^p\,\dd x,
  \quad
  u\in \dot W^{1,p}(\mathbb{R}^n),
\end{equation}
where \(S_{n,p}>0\) is the optimal constant.  The equality cases were classified by Aubin and Talenti \cite{aubin_problemes_1976,Talenti1976}: up to translations, dilations, and multiplication by a constant, they are given by the radial Aubin--Talenti bubble
\begin{equation}\label{eq:v-definition}
  v(x)=v(r)\coloneq (1+r^a)^{-(n-p)/p},
  \quad
  r=|x|,
  \quad
  a\coloneq \frac{p}{p-1}.
\end{equation}

Indeed, minimizing the \(p\)-Dirichlet energy under a fixed \(L^q\)-norm constraint, gives the Euler--Lagrange equation
\[
-\Delta_p u
\coloneq 
-\diver\left(|\nabla u|^{p-2}\nabla u\right)
=
\kappa |u|^{q-2}u
\]
for a suitable Lagrange multiplier \(\kappa>0\).  With the normalization in \eqref{eq:v-definition}, the bubble \(v\) satisfies
\begin{equation}\label{eq:bubble-equation}
  -\Delta_p v
  =
  \kappa_{n,p}v^{q-1},
  \quad
  \kappa_{n,p}
  \coloneq 
  n\left(\frac{n-p}{p-1}\right)^{p-1}.
\end{equation}

To pass from the nonlinear Euler--Lagrange equation to its linearized theory, consider the corresponding Lagrangian
\[
\mathcal{L}(u)
\coloneq 
\frac{1}{p}\int_{\mathbb{R}^n}|\nabla u|^p\,\dd x
-
\frac{\kappa_{n,p}}{q}
\int_{\mathbb{R}^n}|u|^q\,\dd x.
\]
The second variation of its Dirichlet part at \(v\) is determined by the symmetric matrix
\begin{equation}\label{eq:Av}
  A_v(x)
  \coloneq 
  |\nabla v|^{p-2}
  \left(
    I+(p-2)\frac{\nabla v}{|\nabla v|}
    \otimes
    \frac{\nabla v}{|\nabla v|}
  \right).
\end{equation}
Since \(v\) is radial, the radial eigenvalue of \(A_v\) is \((p-1)|v'|^{p-2}\), whereas each tangential eigenvalue is \(|v'|^{p-2}\).  Consequently, the full second variation is
\[
\mathcal{L}''(v)[\phi,\psi]
=
\int_{\mathbb{R}^n} A_v\nabla\phi\cdot\nabla\psi\,\dd x
-
\kappa_{n,p}(q-1)
\int_{\mathbb{R}^n} v^{q-2}\phi\psi\,\dd x.
\]
This quadratic form relative to the weighted space \(L^2(\mathbb{R}^n,v^{q-2}\dd x)\) leads to the generalized eigenvalue problem
\begin{equation}\label{eq:global-eigenproblem-intro}
  -\diver(A_v\nabla\phi)=\mu v^{q-2}\phi
  \quad\text{in }\mathbb{R}^n.
\end{equation}
The operator in \eqref{eq:global-eigenproblem-intro} is understood through the closure of its energy form in \(L^2(\mathbb{R}^n,v^{q-2}\dd x)\), in accordance with the representation theory of closed quadratic forms
\cite{kato_perturbation_2012}.  Variational spectral theory for linearized \(p\)-Laplace equations was developed by Castorina, Esposito, and Sciunzi \cite{castorina_spectral_2011}.  In the present whole-space problem, the formal differential expression \(-\diver(A_v\nabla\cdot)\) does not by itself specify a self-adjoint
operator.  We therefore define \(\mathcal{L}_v\) through the closure of its quadratic form; this construction determines both the operator domain and the endpoint conditions
inherited from the original energy space. Nevertheless, the radial and tangential eigenvalues of \(A_v\) are uniformly comparable to \(|\nabla v|^{p-2}\).  The compact embedding proved by Figalli and Zhang \cite{figalli_sharp_2022} therefore implies that the closed form domain embeds compactly into \(L^2(\mathbb{R}^n,v^{q-2}\dd x)\).  Consequently, the associated operator has compact resolvent: its spectrum is purely discrete, and it remains only to determine its point spectrum and the corresponding eigenspaces. 

Problem \eqref{eq:global-eigenproblem-intro} is motivated by quantitative Sobolev stability. For \(p=2\), Bianchi and Egnell \cite{bianchi_note_1991} proved that the Sobolev deficit controls the squared homogeneous \(H^1\)-distance to the manifold of optimizers. Figalli and Neumayer \cite{figalli_gradient_2018} established gradient stability in the range \(p\geq 2\). At the linearized level, Pistoia and Vaira \cite{pistoia_vaira_2021} proved the nondegeneracy of the critical \(p\)-Laplace bubble for every \(1<p<n\), showing that the kernel of the linearized operator is generated precisely by the infinitesimal dilation and translation modes. Figalli and Zhang \cite{figalli_sharp_2022} subsequently proved sharp gradient stability for every \(1<p<n\), with the natural distance and the optimal exponent. K\"onig \cite{konig_sharp_2023} proved that the optimal global Bianchi--Egnell constant is strictly smaller than the constant predicted by the local spectral gap. Thus, the local spectrum alone does not determine the optimal stability constant.

When \(p=2\), conformal covariance and stereographic projection reduce \eqref{eq:global-eigenproblem-intro} to a spherical problem.  Its spectrum is therefore explicit and the corresponding eigenspaces are identified with the spherical harmonics of degree \(N\) on \(\Sph^n\).  For \(p\ne2\), the radial and tangential coefficients of \(A_v\) differ, and this direct conformal reduction is no longer available.

The use of hypergeometric equations in explicit spectral calculations has several relevant precedents.  Denzler and McCann \cite{denzler_fast_2005} obtained a complete hypergeometric spectrum for a linearized fast-diffusion operator.  Wei and Wu \cite{wei_stability_2024} derived a two-index spectrum in a Caffarelli--Kohn--Nirenberg problem.  For a class of weighted \(p\)-Laplace equations, Deng and Tian \cite{deng_caffarelli-kohn-nirenberg-type_2025} classified all solutions of the linearized Euler--Lagrange equation at the radial extremal and determined the first two distinct spectral levels of the corresponding weighted operator, including their multiplicities.  For the critical quasilinear H\'enon equation, Dai, Duan, Gui, and Li \cite{dai_nonradial_2026} explicitly computed the first radial eigenpair in every spherical-harmonic sector. They also classified the eigenspace corresponding to the eigenvalue \(\mu=1\) and used qualitative information on the higher radial
eigenvalues to determine the Morse index. 

We determine the eigenvalues and eigenfunctions of \eqref{eq:global-eigenproblem-intro} for every spherical-harmonic degree and every radial mode.  After spherical-harmonic decomposition, a gauge transformation reduces each radial equation formally to a Jacobi equation.  We then establish the reduction at the level of closed quadratic forms, which fixes the Friedrichs endpoint conditions and allows Jacobi completeness to yield the full spectrum.

\subsection*{Notation for the spectral data}

Throughout the paper we use
\begin{equation}\label{eq:basic-parameters}
  P\coloneq p-1,\quad
  B\coloneq \frac{n-p}{P},\quad
  \delta\coloneq 2+\frac{n(p-2)}{p},\quad
  C_{n,p}\coloneq \frac{p^2}{P}B^{p-2}.
\end{equation}
Let
\[
\lambda_\ell\coloneq \ell(\ell+n-2),\quad \ell\in\mathbb{N}_0,
\]
be the eigenvalues of \(-\Delta_{\Sph^{n-1}}\), and define
\begin{align}
  D&\coloneq (n-1)P-1=nP-p,\label{eq:D}\\
  s_0&\coloneq 0,\label{eq:s0}\\
  s_\ell
  &\coloneq \frac{-D+\sqrt{D^2+4P\lambda_\ell}}{2P},
  &&\ell\geq 1,\label{eq:sell}\\
  \tau_\ell
  &\coloneq \frac{n-p+\sqrt{(n-p)^2+4P\lambda_\ell}}{2P},
  &&\ell\geq 0,\label{eq:tauell}\\
  m_\ell&\coloneq \frac{P}{p}(s_\ell+\tau_\ell).\label{eq:mell}
\end{align}

For later use, set
\begin{equation}\label{eq:jacobi-parameters-intro}
  \alpha_\ell\coloneq \frac{P(2s_\ell+n)}{p}-1,
  \quad
  \beta_\ell\coloneq \delta+2m_\ell-\frac{P(2s_\ell+n)}{p}-1.
\end{equation}
We denote by \(P_k^{(\alpha,\beta)}\) the Jacobi polynomial in the standard normalization of \cite{szego_orthogonal_1975}.

Let \(\cH_\ell\) be the space of spherical harmonics of degree \(\ell\) on \(\Sph^{n-1}\), and write
\begin{equation}\label{eq:dnl}
  d_{n,\ell}\coloneq \dim\cH_\ell
  =
  \binom{n+\ell-1}{\ell}
  -
  \binom{n+\ell-3}{\ell-2},
\end{equation}
where the second binomial coefficient is understood to be zero when \(\ell<2\).

\begin{theorem}
\label{thm:main-spectrum}
Let \(1<p<n\), and let \(\mathcal{L}_v\) be the nonnegative self-adjoint
operator in
\[
\mathscr H_v=L^2(\mathbb R^n,v^{q-2}\dd x)
\]
defined by the closure of the quadratic form
\[
\phi\longmapsto
\int_{\mathbb R^n}A_v\nabla\phi\cdot\nabla\phi\,\dd x
\]
on smooth compactly supported functions that are constant near the
origin.

For each \(\ell\geq0\), choose an orthonormal basis
\(\{Y_{\ell,j}\}_{j=1}^{d_{n,\ell}}\) of \(\cH_\ell\), and define
\begin{equation}
\label{eq:full-eigenfunctions-intro}
  \Phi_{\ell,k,j}(r,\theta)
  =
  r^{s_\ell}(1+r^a)^{-m_\ell}
  P_k^{(\alpha_\ell,\beta_\ell)}
  \left(\frac{1-r^a}{1+r^a}\right)
  Y_{\ell,j}(\theta).
\end{equation}
Then \(\Phi_{\ell,k,j}\in\cD(\mathcal{L}_v)\) and
\[
\mathcal{L}_v\Phi_{\ell,k,j}
=
\mu_{\ell,k}\Phi_{\ell,k,j},
\qquad
\mu_{\ell,k}
=
C_{n,p}(m_\ell+k)(m_\ell+k+\delta-1).
\]
Moreover, the functions \(\Phi_{\ell,k,j}\), with
\[
\ell,k\in\mathbb N_0,
\qquad
1\leq j\leq d_{n,\ell},
\]
form a complete orthogonal basis of \(\mathscr H_v\).  Hence
\begin{equation}
\label{eq:complete-spectrum-intro}
  \spec(\mathcal{L}_v)
  =
  \{\mu_{\ell,k}:\ell,k\in\mathbb N_0\},
\end{equation}
Moreover
\begin{equation}\label{eq:crossing-rule-intro}
  \mu_{\ell,k}=\mu_{\ell',k'}
  \iff
  m_\ell+k=m_{\ell'}+k'.
\end{equation}
\end{theorem}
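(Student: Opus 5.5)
The proof proceeds in four steps: separate variables, identify each radial block with a Jacobi form, invoke Jacobi completeness, and read off the spectrum and the crossing rule.

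\emph{Step 1: reduction to radial forms.} Since \(A_v\) has radial eigenvalue \((p-1)|v'|^{p-2}\) and tangential eigenvalue \(|v'|^{p-2}\), for \(\phi=f(r)Y(\theta)\) with \(Y\in\cH_\ell\) the energy splits as
\[
\int_0^\infty \left(P f'^2+\frac{\lambda_\ell}{r^2}f^2\right)|v'|^{p-2}r^{n-1}\dd r .
\]
The weight is \(v^{q-2}r^{n-1}\dd r\). The closed form therefore decomposes orthogonally as \(\bigoplus_\ell \mathfrak q_\ell\otimes \mathrm{id}_{\cH_\ell}\). This uses the fact that the core, smooth functions constant near \(0\), projects onto each harmonic sector within the form domain; I would check this by smoothing the projection on annuli. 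Next I compute \(|v'|=\frac{n-p}{P}r^{a-1}(1+r^a)^{-n/p}\). With this in hand, the radial quadratic form becomes an explicit power-weighted form in \(r\) and \(1+r^a\).

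\emph{Step 2: gauge and change of variables.} Substitute \(f=r^{s_\ell}(1+r^a)^{-m_\ell}g\) and \(t=(1-r^a)/(1+r^a)\in(-1,1)\). The formal ODE \(-(wPf')'+\lambda_\ell w r^{-2}f=\mu \rho f\) then becomes the Jacobi equation
\[
(1-t^2)g''+(\beta-\alpha-(\alpha+\beta+2)t)g'=-\tilde\mu g ,
\]
with \(\alpha=\alpha_\ell\) and \(\beta=\beta_\ell\). The exponents are chosen to make this work. \(s_\ell\) is the regular indicial root at \(r=0\), which explains \(D\) and \(\lambda_\ell\). \(\tau_\ell\) is the decaying indicial root at infinity, which explains \(n-p\) and \(\lambda_\ell\). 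This gives \(m_\ell=\frac{P}{p}(s_\ell+\tau_\ell)\). Matching the constant term yields \(\mu=C_{n,p}(m_\ell+k)(m_\ell+k+\delta-1)\) for \(\tilde\mu=k(k+\alpha+\beta+1)\). This requires checking \(\alpha_\ell+\beta_\ell+1=2m_\ell+\delta-1\), which holds since \(\alpha+\beta=\delta+2m_\ell-2\). I would carry this out at the level of quadratic forms, not just the ODE. After an integration by parts, which is justified because the gauge factor solves the zero-order part, one should get \(\mathfrak q_\ell[f]=C_{n,p}\int_{-1}^1\big((1-t^2)g'^2+m_\ell(m_\ell+\delta-1)g^2\big)(1-t)^\alpha(1+t)^\beta\dd t\). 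The map \(f\mapsto g\) is unitary up to a constant from \(L^2(v^{q-2}r^{n-1}\dd r)\) onto \(L^2((1-t)^\alpha(1+t)^\beta\dd t)\). So \(\mathfrak q_\ell\) is unitarily equivalent to \(C_{n,p}\) times (Jacobi form \(+\,m_\ell(m_\ell+\delta-1)\)).

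\emph{Step 3: endpoint conditions, the main obstacle.} I must show that the closure of the form on the image of the core equals the maximal (Friedrichs) Jacobi form domain. This means polynomials must be in the closure, and no extra boundary terms from the integration by parts may survive. First, \(\alpha_\ell>-1\) and \(\beta_\ell>-1\) hold, and Jacobi polynomials are dense. Second, I need to verify that each \(\Phi_{\ell,k,j}\) lies in the form domain. To do this, I would cut off near \(r=0\) and \(r=\infty\) with logarithmic cutoffs \(\eta_\varepsilon\), and show that the boundary terms \(\,[\,g^2\,(\text{gauge log-derivative})\,w]\,\) vanish at both ends. These terms vanish because \(s_\ell\) and \(\tau_\ell\) are the admissible roots. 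For \(\ell=0\) near the origin, \(g\) is constant, which is compatible with the core. For \(\ell\ge1\), I would approximate \(r^{s_\ell}\) by functions vanishing near \(0\); this is possible since \(s_\ell>0\) gives finite energy with vanishing capacity cost. Once the Jacobi polynomials are in the form domain, and since they are eigenfunctions of the Jacobi form that are complete in \(L^2\), the operator associated with \(\mathfrak q_\ell\) is diagonal in that basis. This gives \(\Phi_{\ell,k,j}\in\cD(\mathcal L_v)\) with the stated eigenvalues, and completeness of the \(\Phi_{\ell,k,j}\) in \(\mathscr H_v\) follows from completeness of the spherical harmonics together with Jacobi completeness in each sector.

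\emph{Step 4: spectrum and crossing rule.} The spectrum statement follows from Step 3 and compact resolvent. For the crossing rule, \(x\mapsto x(x+\delta-1)\) is strictly increasing for \(x>-(\delta-1)/2\). Here \(\delta-1=1+n(p-2)/p>1-n/p\cdot\ldots\); more simply, \(2m_\ell+\delta-1=\alpha_\ell+\beta_\ell+1>-1\), which gives \(2x+\delta-1>0\) for \(x=m_\ell+k\ge m_\ell\). Hence equality of the \(\mu\) values is equivalent to equality of the \(m_\ell+k\) values.
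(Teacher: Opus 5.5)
Your route is the paper's: sector decomposition, the gauge \(r^{s_\ell}(1+r^a)^{-m_\ell}\), a form identity with constant potential \(V_\ell=m_\ell(m_\ell+\delta-1)\), polynomials in the form closure, Jacobi completeness, and monotonicity of \(\xi\mapsto\xi(\xi+\delta-1)\). Your variable \((1-r^a)/(1+r^a)\) is just \(1-2t\) in the paper's notation.

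\emph{The endpoint \(r=0\) for \(\ell=0\) does not work as written.} The core requires \(f\), not \(g\), to be constant near \(0\). But \(f_{0,k}=(1+r^a)^{-m_0}P_k^{(\alpha_0,\beta_0)}\bigl((1-r^a)/(1+r^a)\bigr)\) is not constant there for any \(k\). So \(\Phi_{0,k,1}\) must be approximated, and the approximants must equal the constant \(f_{0,k}(0)\) near \(0\) rather than vanish there. The reason is that \(\alpha_0=D/p\) is negative when \(D=(n-1)P-1<0\) (e.g.\ \(n=3\), \(p<3/2\)). Then, in the paper's variable \(t=r^a/(1+r^a)\), one has \(\int_0 t^{-\alpha_0-1}\dd t<\infty\), so the value at \(t=0\) is continuous in the form norm. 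Hence no sequence vanishing near \(0\) converges to \(\Phi_{0,k,1}\), which does not vanish at the origin. The correct replacement costs \(O(\varepsilon^{a+n})\) in squared form norm, because \(f_{0,k}-f_{0,k}(0)=O(r^a)\) and \(f_{0,k}'=O(r^{a-1})\); this is the \(Z_*\) construction in \cref{lem:polynomial-form-domain}.

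Where you do cut off to zero (\(\ell\ge1\) at \(r=0\), every \(\ell\) at \(r=\infty\)), the operative conditions are \(\alpha_\ell>0\) and \(\beta_\ell>0\), not \(\alpha_\ell,\beta_\ell>-1\). Also, \(s_\ell>0\) is not the reason finite energy holds. Finite energy of \(r^{s_\ell}Y\) near \(0\) requires \(2s_\ell+n>a\), i.e.\ \(s_\ell>-D/(2P)\), which is stronger than \(s_\ell>0\) when \(D<0\). It holds because \(s_\ell\) is the larger root of \(Ps^2+Ds-\lambda_\ell=0\). Likewise, \(\tau_\ell>(n-p)/(2P)\) gives \(\beta_\ell>0\). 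Both facts are \cref{lem:parameter-bounds}.

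\emph{Two smaller errors.} First, in the crossing rule, \(2m_\ell+\delta-1=\alpha_\ell+\beta_\ell+1>-1\) does not give \(2x+\delta-1>0\). Use \(\beta_\ell>0\) together with \(\alpha_\ell>-1\), or compute \(2m_0+\delta-1=n-1\) and use \(m_\ell\ge m_0\), as in \eqref{eq:Lambda-derivative-at-m0}. Second, your displayed form identity drops the constant of the unitary map. In your variable \(t\in(-1,1)\),
\(\|f\|^2_{\mathscr K_v}=a^{-1}2^{-\alpha_\ell-\beta_\ell-1}\int_{-1}^1 g^2(1-t)^{\alpha_\ell}(1+t)^{\beta_\ell}\dd t\),
so the same factor must multiply the right-hand side of your form identity; otherwise the eigenvalues come out rescaled.
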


\begin{remark}
Set
\begin{equation}
\label{eq:separated-eigenspace}
  E_{\ell,k}
  \coloneq
  \spanop\{\Phi_{\ell,k,j}:1\leq j\leq d_{n,\ell}\}.
\end{equation}
Then for every \(\mu\in\spec(\mathcal{L}_v)\),
\begin{equation}
\label{eq:numerical-eigenspace-intro}
  \ker(\mathcal{L}_v-\mu I)
  =
  \bigoplus_{\substack{\ell,k\geq0\\ \mu_{\ell,k}=\mu}}
  E_{\ell,k},
\end{equation}
and
\[
\dim\ker(\mathcal{L}_v-\mu I)
=
\sum_{\substack{\ell,k\geq0\\ \mu_{\ell,k}=\mu}}
d_{n,\ell}<\infty.
\]  
\end{remark}

The first two distinct eigenspaces have the expected variational meaning.  The pair \((0,0)\) is generated by \(v\).  Moreover,
\[
m_0=\frac{n-p}{p},
\quad
m_1=\frac{n}{p}=m_0+1,
\]
so the pairs \((0,1)\) and \((1,0)\) produce the dilation and translation modes at the common eigenvalue.

Our application is the exact identification of the next distinct eigenspace.
\begin{corollary}
\label{cor:E3-intro}
Assume that \(n\geq 3\) and \(1<p<2\).  Write the distinct
eigenvalues of \(\mathcal{L}_v\) in increasing order as
\[
\mu_1<\mu_2<\mu_3<\cdots,
\]
and let
\[
E_j\coloneq\ker(\mathcal{L}_v-\mu_j I).
\]
Then the third distinct eigenvalue is
\begin{equation}
\label{eq:mu3-intro}
  \mu_3
  =
  \mu_{2,0}
  =
  C_{n,p}m_2(m_2+\delta-1),
\end{equation}
and its eigenspace is
\begin{equation}
\label{eq:E3-intro}
  E_3
  =
  E_{2,0}
  =
  \left\{
  r^{s_2}(1+r^a)^{-m_2}Y(\theta):
  Y\in\cH_2
  \right\}.
\end{equation}
Consequently,
\begin{equation}
\label{eq:E3-dimension-intro}
  \dim E_3
  =
  d_{n,2}
  =
  \frac{(n-1)(n+2)}{2}.
\end{equation}
\end{corollary}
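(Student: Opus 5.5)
The plan is to read the corollary off \cref{thm:main-spectrum} and the subsequent remark by reducing everything to an ordering problem for the numbers \(t=m_\ell+k\). Put \(f(t)=C_{n,p}\,t(t+\delta-1)\), so that \(\mu_{\ell,k}=f(m_\ell+k)\). The function \(f\) is strictly increasing for \(t>(1-\delta)/2=(2n-p-np)/(2p)\). Every admissible \(t\) satisfies \(t\geq m_0=(n-p)/p\), since \(s_\ell,\tau_\ell\) are nondecreasing in \(\lambda_\ell\). One checks that \((n-p)/p>(2n-p-np)/(2p)\) reduces to \(np>p\), which holds. Hence ordering the distinct eigenvalues is the same as ordering the distinct values of \(m_\ell+k\), consistent with \eqref{eq:crossing-rule-intro}.

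The values to compare are the following. First, \(m_0\). Next, \(m_0+1=m_1\). Then the family \(m_0+2=m_1+1\) together with \(m_0+k\) and \(m_1+k\) for larger \(k\). Finally \(m_\ell+k\) for \(\ell\geq2\). Since \(s_\ell\) and \(\tau_\ell\) are strictly increasing in \(\lambda_\ell\), we get \(m_1<m_2<m_3<\cdots\). It therefore suffices to prove the strict inequality
\[
m_2<m_0+2 \quad (1<p<2).
\]
Granting it, the value \(t=m_2\) lies strictly between \(m_0+1\) and \(m_0+2\). It is thus not of the form \(m_0+k\) or \(m_1+k\), and it is attained only by the pair \((2,0)\). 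All other pairs give \(t\geq m_0+2\) or \(t\geq m_3\) or \(t\geq m_2+1\). Hence \(\mu_3=\mu_{2,0}\). By \eqref{eq:numerical-eigenspace-intro}, \(E_3=E_{2,0}\), and since \(P_0^{(\alpha,\beta)}\equiv1\) this is exactly \eqref{eq:E3-intro}. The dimension is \(d_{n,2}=\binom{n+1}{2}-1=(n-1)(n+2)/2\).

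The main step is the inequality. With \(\lambda_2=2n\), write \(X=\sqrt{D^2+8Pn}\) and \(Y=\sqrt{(n-p)^2+8Pn}\). Multiplying \(m_2<m_0+2=(n+p)/p\) by \(p/P\) and inserting \eqref{eq:sell}–\eqref{eq:tauell} gives \(X+Y<np+2p=(nP+p)+(n+p)\). At \(p=2\) this is an equality, with \(X=Y=n+2\). For \(p<2\) I compare each root with its \(p=2\) counterpart. We have \(X^2-(nP+p)^2=4nP(2-p)\) and \((n+p)^2-Y^2=4n(2-p)\). So the required inequality is
\[
\frac{4nP(2-p)}{X+nP+p}<\frac{4n(2-p)}{n+p+Y},
\]
which after clearing denominators becomes \(PY<X+p(2-p)\).

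To prove this, square both sides. Note that \(P^2(n-p)^2-(nP-p)^2=p(2-p)(2nP-p^2)\) and \(8P^3n<8Pn\). These give \(P^2Y^2-X^2<p(2-p)(2nP-p^2)\), which is less than \(2p(2-p)(nP+p)<2p(2-p)X\), using \(X>nP+p\). This yields \(P^2Y^2<(X+p(2-p))^2\), as required. This algebra is the only delicate point. The rest is bookkeeping from \cref{thm:main-spectrum} and the remark.
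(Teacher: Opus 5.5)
Your proposal is correct. Its skeleton matches the paper's: monotonicity of $\xi\mapsto C_{n,p}\xi(\xi+\delta-1)$ on $[m_0,\infty)$ reduces everything to ordering the numbers $m_\ell+k$, and the crux is $m_1<m_2<m_0+2$, i.e.\ $X+Y<p(n+2)$, which is the paper's $\Delta_s+\Delta_\tau<p(n+2)$.

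Where you genuinely differ is in how you prove that inequality. The paper squares twice and relies on the exact polynomial identity $\bigl[(6n+2)P^2+2(n^2-2n+2)P+6n+2\bigr]^2-4\Delta_s^2\Delta_\tau^2=32n(n+1)(1-P^2)^2$, which requires a fairly lengthy expansion. You instead rationalize each radical against its value at $p=2$. This pulls the common factor $2-p$ out of both $X-(nP+p)$ and $(n+p)-Y$ and reduces the claim to $PY<X+p(2-p)$. A single squaring with crude bounds then finishes: you drop $8Pn(P^2-1)<0$, use $-p^2<2p$, and use $X>nP+p$. I checked each identity and estimate you use, and they all hold.

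Your route makes it transparent why $p=2$ is exactly the equality case, and it keeps the algebra light. Its cost is that it uses $p<2$ in several places: dividing by $2-p>0$, $P<1$, and $X>nP+p$. The paper's identity, by contrast, does not depend on the sign of $1-P$. Since its right-hand side is positive for every $P\neq1$, it gives the strict inequality $m_2<m_0+2$ for every $p\neq2$, together with an explicit measure of the gap.

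One small imprecision: $m_\ell\geq m_0$ is better justified by $s_\ell>0=s_0$ for $\ell\geq1$ and $\tau_\ell\geq\tau_0$. The value $s_0=0$ is fixed by convention rather than by \eqref{eq:sell}. When $D<0$, formula \eqref{eq:sell} at $\lambda=0$ would give $-D/P\neq0$.
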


\subsection*{Structure of the proof}

\Cref{sec:operator-framework} defines the self-adjoint operator associated with the closed quadratic form and uses spherical-harmonic decomposition to reduce the full problem to the radial operators \(T_\ell\).  \Cref{sec:jacobi-reduction} determines the admissible asymptotic powers at \(r=0\) and \(r=\infty\), and formally transforms each radial eigenvalue equation into a shifted Jacobi equation. \Cref{sec:form-identification} identifies the transformed closed form and proves that the resulting Jacobi functions satisfy the endpoint conditions and belong to the domains of the corresponding operators. \Cref{sec:complete-spectrum} uses the orthogonality and completeness of the Jacobi polynomials to prove that these functions form a complete eigenbasis for each \(T_\ell\).  Finally, \cref{sec:E3} combines the spectra of all angular sectors, characterizes all coincidences among the eigenvalues, proves the main theorem, and identifies the third distinct eigenspace when \(1<p<2\).

\section{The operator framework and angular reduction}
\label{sec:operator-framework}

\subsection{The closed form}

Differentiating \eqref{eq:v-definition} gives
\begin{equation}\label{eq:vprime}
  v'(r)=-B r^{1/P}(1+r^a)^{-n/p}.
\end{equation}
Hence, with
\begin{align}
  \rho(r)&\coloneq v(r)^{q-2}=(1+r^a)^{-\delta},
  \label{eq:rho-weight}\\
  w(r)&\coloneq |v'(r)|^{p-2}=B^{p-2}r^{(p-2)/P}(1+r^a)^{-n(p-2)/p},
  \label{eq:w-weight}
\end{align}
the matrix in \eqref{eq:Av} can be written as
\begin{equation}\label{eq:Av-radial}
 A_v=w(r)\left(I+(p-2)e_r\otimes e_r\right).
\end{equation}

Let
\begin{equation}\label{eq:global-core}
 \cD_{c,0}^{\infty}
 \coloneq 
 \left\{
 \phi\in C_c^\infty(\mathbb{R}^n):
 \phi\text{ is constant in a neighbourhood of }0
 \right\}.
\end{equation}
This is dense in \(\mathscr{H}_v=L^2(\mathbb{R}^n,\rho\dd x)\).  On this core set
\begin{equation}\label{eq:global-form}
 \mathfrak{a}_v[\phi,\psi]
 \coloneq 
 \int_{\mathbb{R}^n}
 A_v\nabla\phi\cdot\nabla\psi\,\dd x.
\end{equation}
It is a nonnegative densely defined closable form.  We use the same symbol for its closure and denote its form domain by \(\cD(\mathfrak{a}_v)\).  The representation theorem for closed forms yields a unique nonnegative self-adjoint operator \(\mathcal{L}_v\) \cite{kato_perturbation_2012} such that
\[
\mathfrak{a}_v[\phi,\psi]
=
\ip{\mathcal{L}_v\phi}{\psi}_{\mathscr{H}_v},
\quad
\phi\in\cD(\mathcal{L}_v),\quad \psi\in\cD(\mathfrak{a}_v).
\]
Thus \eqref{eq:global-eigenproblem-intro} is interpreted weakly, with the endpoint conditions determined by \(\cD(\mathfrak{a}_v)\).

\subsection{Spherical harmonics and radial Friedrichs operators}

Choose an orthonormal basis
\[
\{Y_{\ell,j}\}_{j=1}^{d_{n,\ell}}
\]
of \(\cH_\ell\), so that
\[
-\Delta_{\Sph^{n-1}}Y_{\ell,j}
=
\lambda_\ell Y_{\ell,j}.
\]
The spherical-harmonic transform is unitary from \(\mathscr{H}_v\) onto
\[
\widehat{\bigoplus}_{\ell=0}^{\infty}
\left(\mathscr{K}_v\otimes\cH_\ell\right),
\quad
\mathscr{K}_v
\coloneq 
L^2\left((0,\infty),\rho(r)r^{n-1}\dd r\right).
\]

For each \(\ell\), introduce the radial core
\begin{equation}\label{eq:radial-cores}
  \mathcal{D}_\ell^{\mathrm{rad}}
  \coloneq 
  \begin{cases}
    \left\{
    f\in C_c^\infty([0,\infty)):
    f\text{ is constant near }0
    \right\},
    & \ell=0,
    \\[2mm]
    C_c^\infty(0,\infty),
    & \ell\geq 1,
  \end{cases}
\end{equation}
and define on this core the nonnegative form
\begin{equation}\label{eq:radial-form}
 \mathfrak{b}_\ell[f,g]
 \coloneq 
 \int_0^\infty
 w(r)
 \left(
 P f'(r)g'(r)
 +
 \frac{\lambda_\ell}{r^2}
 f(r)g(r)
 \right)
 r^{n-1}\dd r.
\end{equation}
The form is closable in \(\mathscr{K}_v\).  We use the same notation \(\mathfrak{b}_\ell\) for its closure and denote its domain by
\[
\mathcal{D}_\ell(\mathfrak{b}_\ell):
=
\overline{\mathcal{D}_\ell^{\mathrm{rad}}}^{
\|\cdot\|_{\mathfrak{b}_\ell}},
\quad
\|f\|_{\mathfrak{b}_\ell}^2
\coloneq 
\|f\|_{\mathscr{K}_v}^2
+
\mathfrak{b}_\ell[f,f].
\]
Let \(T_\ell\) be the nonnegative self-adjoint operator in \(\mathscr{K}_v\) associated with the closed form
\(\left(\mathfrak{b}_\ell,\mathcal{D}_\ell(\mathfrak{b}_\ell)\right).\)

\begin{proposition}
\label{prop:angular-decomposition}
Under the spherical-harmonic transform,
\begin{equation}\label{eq:operator-direct-sum}
  \mathcal{L}_v
  \simeq
  \widehat{\bigoplus}_{\ell=0}^{\infty}
  \left(T_\ell\otimes I_{\cH_\ell}\right).
\end{equation}
\end{proposition}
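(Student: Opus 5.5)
The approach is to prove the unitary equivalence at the level of closed forms: show that the spherical-harmonic transform \(U\) maps the closed form \((\mathfrak a_v,\cD(\mathfrak a_v))\) onto the orthogonal sum of the closed forms \(\mathfrak b_\ell\otimes I_{\cH_\ell}\). Uniqueness in the representation theorem \cite{kato_perturbation_2012} then gives \eqref{eq:operator-direct-sum}. For \(\phi\in\cD_{c,0}^\infty\) write
\[
\phi(r\theta)=\sum_{\ell,j} f_{\ell,j}(r)Y_{\ell,j}(\theta),\qquad f_{\ell,j}(r)=\int_{\Sph^{n-1}}\phi(r\theta)Y_{\ell,j}(\theta)\dd\theta .
\]
Using \eqref{eq:Av-radial} and \(A_v\nabla\phi\cdot\nabla\phi=w\bigl(P|\partial_r\phi|^2+r^{-2}|\nabla_\theta\phi|^2\bigr)\), Parseval on each sphere together with \(\int|\nabla_\theta Y|^2=\lambda_\ell\int|Y|^2\) gives
\[
\mathfrak a_v[\phi,\phi]=\sum_{\ell,j}\mathfrak b_\ell[f_{\ell,j},f_{\ell,j}],\qquad \|\phi\|_{\mathscr H_v}^2=\sum_{\ell,j}\|f_{\ell,j}\|_{\mathscr K_v}^2 .
\]
The interchange of sum and integral is justified by Fubini and monotone convergence.

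First I would check that each coefficient \(f_{\ell,j}\) of a core function lies in \(\cD_\ell(\mathfrak b_\ell)\). For \(\ell=0\), \(f_{0,1}\) is smooth, compactly supported and constant near \(0\), so it belongs to \(\mathcal D_0^{\mathrm{rad}}\). For \(\ell\ge1\), \(\phi\) is constant near \(0\), so \(f_{\ell,j}\) vanishes near \(r=0\) and lies in \(C_c^\infty(0,\infty)\). Hence \(U\) maps \(\cD_{c,0}^\infty\) into the algebraic sum of the radial cores tensored with \(\cH_\ell\), and the form identity passes to closures. This gives \(U\cD(\mathfrak a_v)\subset\bigoplus_\ell\cD_\ell(\mathfrak b_\ell)\otimes\cH_\ell\) isometrically for the form norms.

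Next I would prove the reverse inclusion, which is the main point. Finite sums \(\sum f_{\ell,j}(r)Y_{\ell,j}(\theta)\) with \(f_{\ell,j}\) in the radial cores are dense in the closed direct-sum form domain, since each summand's core is dense in \(\cD_\ell(\mathfrak b_\ell)\) and the sum is orthogonal. It therefore suffices that each product \(f(r)Y_{\ell,j}(\theta)\) lies in \(\cD(\mathfrak a_v)\).
\begin{itemize}
\item For \(\ell=0\), \(f(|x|)\) with \(f\) constant near \(0\) is itself in \(\cD_{c,0}^\infty\).
\item For \(\ell\ge1\), \(f\in C_c^\infty(0,\infty)\) and \(Y\) is smooth on the sphere. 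Hence \(f(r)Y(\theta)=f(r)r^{-\ell}H(x)\), with \(H\) a harmonic polynomial, is smooth and supported away from \(0\), so it belongs to \(\cD_{c,0}^\infty\).
\end{itemize}
Thus the transformed core is dense for the direct-sum form norm, and the closures coincide.

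The expected obstacle is bookkeeping rather than analysis: the closedness of the infinite orthogonal sum of closed forms and the identification of its domain as \(\{(g_\ell):\sum\|g_\ell\|_{\mathfrak b_\ell}^2<\infty\}\). This is standard and I would state it as a lemma. It also requires that the associated operator of an orthogonal sum of forms is the orthogonal sum of the associated operators, which is checked directly from the defining relation \(\mathfrak a[\phi,\psi]=\ip{\mathcal L\phi}{\psi}\) applied with \(\psi\) ranging over single sectors. Closability of \(\mathfrak b_\ell\) is inherited from closability of \(\mathfrak a_v\) through the isometric embedding \(f\mapsto fY_{\ell,1}\).
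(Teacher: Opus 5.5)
Your proposal is correct and follows essentially the paper's route: the form identity on \(\cD_{c,0}^\infty\) via Parseval on spheres, its extension to the closed form domains, and identification of the associated operator by testing the representation identity one sector at a time. You are more explicit than the paper about the surjectivity of the transform onto \(\widehat{\bigoplus}_{\ell}\cD_\ell(\mathfrak{b}_\ell)\otimes\cH_\ell\) (via \(f(r)Y(\theta)\in\cD_{c,0}^\infty\)), which the paper's chain of equivalences uses tacitly; the only slip is the phrase ``algebraic sum'', since a core function generally has infinitely many nonzero coefficients, so its image is a square-summable family of radial-core elements rather than a finite sum, and this changes nothing in the argument.
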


\begin{proof}
Let\(\phi,\psi\in\mathcal{D}(\mathfrak{a}_v).\) Under the spherical-harmonic transform, write
\[
\phi(r,\theta)
=
\sum_{\ell=0}^{\infty}
\sum_{j=1}^{d_{n,\ell}}
f_{\ell,j}(r)Y_{\ell,j}(\theta),
\quad
\psi(r,\theta)
=
\sum_{\ell=0}^{\infty}
\sum_{j=1}^{d_{n,\ell}}
g_{\ell,j}(r)Y_{\ell,j}(\theta).
\]
Then
\[
f_{\ell,j},g_{\ell,j}
\in\mathcal{D}_\ell(\mathfrak{b}_\ell),
\quad
\forall\ell,j,
\]
and
\[
\sum_{\ell,j}
\|f_{\ell,j}\|_{\mathfrak{b}_\ell}^{2}
<\infty,
\quad
\sum_{\ell,j}
\|g_{\ell,j}\|_{\mathfrak{b}_\ell}^{2}
<\infty.
\]
Indeed, the corresponding assertions and the form identity hold on the core by the orthogonality of spherical harmonics.  A standard approximation argument in the form norm, using spherical-harmonic partial sums, extends them to the full form domain \(\mathcal{D}(\mathfrak{a}_v)\).  Consequently,
\begin{equation}\label{eq:form-direct-sum}
  \mathfrak{a}_v[\phi,\psi]
  =
  \sum_{\ell=0}^{\infty}
  \sum_{j=1}^{d_{n,\ell}}
  \mathfrak{b}_\ell
  [f_{\ell,j},g_{\ell,j}].
\end{equation}

Now let
\[
F(r,\theta)
=
\sum_{\ell=0}^{\infty}
\sum_{j=1}^{d_{n,\ell}}
h_{\ell,j}(r)Y_{\ell,j}(\theta)
\in\mathscr{H}_v.
\]
By the definitions of the operators associated with the closed forms,
\[
\begin{aligned}
  &\phi\in\mathcal{D}(\mathcal{L}_v),
  \quad
  \mathcal{L}_v\phi=F
  \\[1mm]
  &\iff
  \mathfrak{a}_v[\phi,\psi]
  =
  \langle F,\psi\rangle_{\mathscr{H}_v},
  \quad
  \forall\psi\in\mathcal{D}(\mathfrak{a}_v)
  \\[1mm]
  &\iff
  \sum_{\ell,j}
  \mathfrak{b}_\ell[f_{\ell,j},g_{\ell,j}]
  =
  \sum_{\ell,j}
  \langle h_{\ell,j},g_{\ell,j}\rangle_{\mathscr{K}_v},
  \quad
  \forall
  (g_{\ell,j})
  \in
  \widehat{\bigoplus}_{\ell,j}
  \mathcal{D}_\ell(\mathfrak{b}_\ell)
  \\[1mm]
  &\iff
  \mathfrak{b}_\ell[f_{\ell,j},g]
  =
  \langle h_{\ell,j},g\rangle_{\mathscr{K}_v},
  \quad
  \forall g\in\mathcal{D}_\ell(\mathfrak{b}_\ell),
  \quad
  \forall\ell,j
  \\[1mm]
  &\iff
  f_{\ell,j}\in\mathcal{D}(T_\ell),
  \quad
  T_\ell f_{\ell,j}=h_{\ell,j},
  \quad
  \forall\ell,j.
\end{aligned}
\]
Since \(F\in\mathscr{H}_v\), the family \((h_{\ell,j})=(T_\ell f_{\ell,j})\) is square summable.  This is exactly the domain and action of the operator direct sum in
\eqref{eq:operator-direct-sum}.
\end{proof}

By \cref{prop:angular-decomposition}, the eigenvalue problem for
\(\mathcal{L}_v\) is equivalent to the family of eigenvalue problems for
\(T_\ell\), \(\ell\geq 0\).  More precisely,
\[
\ker(\mathcal{L}_v-\mu)
\simeq
\widehat{\bigoplus}_{\ell=0}^{\infty}
\left(
\ker(T_\ell-\mu)\otimes\mathcal{H}_\ell
\right).
\]
It therefore remains to determine the eigenvalues and eigenfunctions
of \(T_\ell\) for each \(\ell\).

For a sufficiently regular radial function, the equation \(T_\ell f=\mu f\) has the differential expression
\begin{equation}\label{eq:radial-divergence-equation}
  -\frac{1}{\rho r^{n-1}}
  \frac{\dd}{\dd r}
  \left(Pwr^{n-1}f'\right)
  +\frac{w\lambda_\ell}{\rho r^2}f
  =\mu f.
\end{equation}
Substitution of \eqref{eq:rho-weight} and \eqref{eq:w-weight}, followed by division by the coefficient of \(f''\), gives
\begin{equation}\label{eq:radial-normalized-equation}
  \begin{aligned}
    P f''
    +
    \left[
    \frac{nP-1}{r}
    -n(p-2)\frac{r^{a-1}}{1+r^a}
    \right]f'
    -\frac{\lambda_\ell}{r^2}f
    +\widehat\mu\frac{r^{a-2}}{(1+r^a)^2}f
    =0.
  \end{aligned}
\end{equation}
Here and below,
\begin{equation}\label{eq:muhat}
  \widehat\mu\coloneq \mu B^{2-p}.
\end{equation}

\section{Reduction of every radial block to a Jacobi equation}
\label{sec:jacobi-reduction}

\subsection{Indicial roots at zero and infinity}

To determine the indicial roots at \(r=0\), observe that
\[
\frac{r^{a-1}}{1+r^a}
=O(r^{a-1})=o(r^{-1}),
\qquad
\frac{r^{a-2}}{(1+r^a)^2}
=O(r^{a-2})=o(r^{-2}).
\]
Hence the leading Euler equation associated with
\eqref{eq:radial-normalized-equation} is
\[
P f''
+\frac{nP-1}{r}f'
-\frac{\lambda_\ell}{r^2}f
=0.
\]
Substituting the trial function \(f(r)=r^s\) gives
\begin{equation}\label{eq:origin-indicial}
  Ps(s-1)+(nP-1)s-\lambda_\ell=0,
\end{equation}
or, equivalently,
\begin{equation}\label{eq:origin-indicial-simplified}
  Ps^2+Ds-\lambda_\ell=0.
\end{equation}
For \(\ell\geq1\), the two roots have opposite signs, and the positive root is \(s_\ell\) from \eqref{eq:sell}.  For \(\ell=0\), the radial form domain determined by \eqref{eq:radial-cores} selects \(s_0=0\).

At \(r=\infty\), one has
\[
\frac{r^{a-1}}{1+r^a}
=r^{-1}+o(r^{-1}),
\qquad
\frac{r^{a-2}}{(1+r^a)^2}
=O(r^{-a-2})=o(r^{-2}).
\]
Therefore the leading Euler equation is
\[
P f''
+\frac{n-1}{r}f'
-\frac{\lambda_\ell}{r^2}f
=0.
\]
Substituting the trial function \(f(r)=r^{-\tau}\) yields
\begin{equation}\label{eq:infinity-indicial}
  P\tau^2-(n-p)\tau-\lambda_\ell=0.
\end{equation}
The positive root is \(\tau_\ell\) from \eqref{eq:tauell}, and it corresponds to the decaying power \(r^{-\tau_\ell}\).

\begin{lemma}
\label{lem:parameter-bounds}
For every \(\ell\geq 0\),
\[
\alpha_\ell>-1,\quad \beta_\ell>0.
\]
For every \(\ell\geq 1\), one also has \(\alpha_\ell>0\).
\end{lemma}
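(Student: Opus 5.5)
The plan is to compute both parameters in closed form from the definitions \eqref{eq:sell}, \eqref{eq:tauell}, \eqref{eq:mell} and \eqref{eq:jacobi-parameters-intro}. The square roots there then appear as explicit nonnegative quantities, and the inequalities reduce to the standing assumption \(1<p<n\). No earlier result beyond these definitions and \eqref{eq:D} is needed.

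\emph{Step 1: the parameter \(\alpha_\ell\).} We have
\[
\alpha_\ell=\frac{2Ps_\ell+nP-p}{p}=\frac{2Ps_\ell+D}{p},
\]
using \(D=nP-p\) from \eqref{eq:D}.
\begin{itemize}
\item For \(\ell=0\), since \(s_0=0\), this gives \(\alpha_0=D/p=(nP-p)/p\). Because \(nP>0\), we get \(nP-p>-p\), so \(\alpha_0>-1\).
\item For \(\ell\geq1\), definition \eqref{eq:sell} gives \(2Ps_\ell+D=\sqrt{D^2+4P\lambda_\ell}\). Hence \(\alpha_\ell=\sqrt{D^2+4P\lambda_\ell}/p\), which is strictly positive because \(\lambda_\ell>0\) and \(P>0\). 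This yields both \(\alpha_\ell>0\) and \(\alpha_\ell>-1\).
\end{itemize}

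\emph{Step 2: the parameter \(\beta_\ell\).} Insert \(2m_\ell=2P(s_\ell+\tau_\ell)/p\) into \eqref{eq:jacobi-parameters-intro}. The \(s_\ell\)-terms cancel, leaving
\[
\beta_\ell=\delta-1+\frac{2P\tau_\ell-nP}{p}.
\]
By \eqref{eq:tauell}, \(2P\tau_\ell=n-p+\sqrt{(n-p)^2+4P\lambda_\ell}\geq 2(n-p)\), since \(\lambda_\ell\geq0\) and \(n>p\). Substituting \(\delta-1=1+n(p-2)/p\), we obtain
\[
\beta_\ell\geq 1+\frac{n(p-2)+2(n-p)-n(p-1)}{p}=1+\frac{n-2p}{p}=\frac{n}{p}-1>0.
\]
Equality holds exactly when \(\ell=0\).

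There is no real obstacle here. The only point requiring care is the algebraic bookkeeping in Step 2: one has to check that the \(s_\ell\)-contributions cancel and that the constant terms collapse to \(n/p-1\). The positivity of \(\beta_\ell\) then rests solely on \(p<n\).
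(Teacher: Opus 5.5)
Your proof is correct and follows essentially the same route as the paper: for \(\beta_\ell\) you use the same identity \(\beta_\ell+1=(2P\tau_\ell+2p-n)/p\) together with \(\tau_\ell\geq\tau_0=(n-p)/P\). The one difference is for \(\alpha_\ell\) with \(\ell\geq1\), where you use the closed form \(\alpha_\ell=\sqrt{D^2+4P\lambda_\ell}/p\) instead of the paper's bound \(s_\ell\geq s_1=1/P\); this avoids monotonicity in \(\ell\) and the explicit value of \(s_1\), so it is slightly more self-contained.
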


\begin{proof}
By \eqref{eq:jacobi-parameters-intro},
\[
\alpha_\ell+1=\frac{P(2s_\ell+n)}{p}>0.
\]
If \(\ell\geq 1\), then \(s_\ell\geq s_1=1/P\), and hence
\[
\alpha_\ell+1
\ge\frac{Pn+2}{p}>1.
\]
Using \(m_\ell=P(s_\ell+\tau_\ell)/p\) in the definition of \(\beta_\ell\) yields the useful identity
\begin{equation}\label{eq:beta-plus-one}
  \beta_\ell+1
  =\frac{2P\tau_\ell+2p-n}{p}.
\end{equation}
Since \(\tau_\ell\ge\tau_0=(n-p)/P\), the right-hand side is at least \(n/p>1\).  Thus \(\beta_\ell>0\).
\end{proof}

\subsection{The gauge and compactifying variable}

Fix \(\ell\), abbreviate \(s=s_\ell\), \(\tau=\tau_\ell\), and put
\[
\sigma\coloneq s+\tau,\quad m\coloneq m_\ell=\frac{\sigma}{a}.
\]
Define
\begin{equation}\label{eq:gauge-and-t}
  t\coloneq \frac{r^a}{1+r^a}\in(0,1),
  \quad
  g_\ell(r)\coloneq r^s(1+r^a)^{-m},
  \quad
  f(r)\coloneq g_\ell(r)z(t).
\end{equation}
The two powers in \(g_\ell\) incorporate simultaneously the admissible behaviour \(r^s\) at zero and \(r^{-\tau}\) at infinity.

For clarity, we record the derivative calculation in full.  Set
\begin{equation}\label{eq:A-and-h}
  A(t)\coloneq at(1-t),
  \quad
  h(t)\coloneq s-\sigma t.
\end{equation}
Then
\[
r\frac{\dd t}{\dd r}=A(t),
\quad
r\frac{g_\ell'}{g_\ell}=h(t),
\]
and therefore
\begin{equation}\label{eq:first-derivative}
  rf'=g_\ell\left(hz+Az_t\right).
\end{equation}
Differentiating this identity with respect to \(r\), and using \(r\partial_r=A\partial_t\), gives
\[
r(rf')'
=
g_\ell
\left[
h(hz+Az_t)
+A\partial_t(hz+Az_t)
\right].
\]
Since \(r(rf')'=rf'+r^2f''\), it follows that
\begin{equation}\label{eq:second-derivative}
  r^2f''
  =
  g_\ell
  \left[
  A^2z_{tt}
  +A(2h-1+A_t)z_t
  +\left(h(h-1)-\sigma A\right)z
  \right].
\end{equation}

\begin{proposition}
\label{prop:jacobi-ode}
Under \eqref{eq:gauge-and-t}, equation \eqref{eq:radial-normalized-equation} is equivalent to
\begin{equation}\label{eq:jacobi-equation}
  t(1-t)z_{tt}
  +
  \left[
  \alpha_\ell+1
  -(\alpha_\ell+\beta_\ell+2)t
  \right]z_t
  +
  \left(
  \frac{\mu}{C_{n,p}}-V_\ell
  \right)z
  =0,
\end{equation}
where
\begin{equation}\label{eq:Vell}
  V_\ell
  \coloneq 
  m_\ell\left(
  \frac{P(2s_\ell+n)}{p}
  \right)
  +\frac{n(p-2)P}{p^2}s_\ell
  =
  m_\ell(m_\ell+\delta-1).
\end{equation}
\end{proposition}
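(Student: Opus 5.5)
Multiply \eqref{eq:radial-normalized-equation} by \(r^2\) and substitute \eqref{eq:first-derivative} and \eqref{eq:second-derivative}. Every coefficient then becomes a rational function of \(t\), through the identities
\[
r^a=\frac{t}{1-t},\qquad \frac{r^a}{1+r^a}=t,\qquad \frac{r^a}{(1+r^a)^2}=t(1-t).
\]
In particular the drift coefficient becomes \(r\cdot r^{a-1}/(1+r^a)=t\), and the potential becomes \(r^2\cdot r^{a-2}/(1+r^a)^2=t(1-t)\). After cancelling the nonvanishing gauge factor \(g_\ell\), we obtain
\[
P\bigl[A^2z_{tt}+A(2h-1+A_t)z_t+(h(h-1)-\sigma A)z\bigr]
+\bigl[(nP-1)-n(p-2)t\bigr](hz+Az_t)-\lambda_\ell z+\widehat\mu\,t(1-t)z=0.
\]
Since \(A=at(1-t)\), we divide by \(Pa^2t(1-t)\); the leading coefficient then becomes \(t(1-t)\). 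Note that \(Pa^2=p^2/P\), so \(\widehat\mu/(Pa^2)=\mu B^{2-p}P/p^2=\mu/C_{n,p}\), which produces the spectral term in \eqref{eq:jacobi-equation}.

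For the \(z_t\) coefficient we compute \(\bigl[P(2h-1+A_t)+(nP-1)-n(p-2)t\bigr]/(Pa)\) as a linear polynomial in \(t\). Its constant term is \((2Ps-P+Pa+nP-1)/(Pa)\). Using \(Pa=p\), this equals \((2Ps+nP)/p=\alpha_\ell+1\). Its \(t\)-coefficient is \(-(2P\sigma+2Pa+n(p-2))/(Pa)\). We check that this equals \(-(\alpha_\ell+\beta_\ell+2)=-(\delta+2m_\ell)\), using \(m=\sigma/a=P\sigma/p\) and \(\delta=2+n(p-2)/p\).

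The zeroth-order term is the main point, and the only real obstacle. Before division it equals
\[
P h(h-1)+(nP-1)h-\lambda_\ell-P\sigma A-n(p-2)th .
\]
At \(t=0\) we have \(h=s\), so this reduces to \(Ps(s-1)+(nP-1)s-\lambda_\ell\), which vanishes by the indicial equation \eqref{eq:origin-indicial}. At \(t=1\) we have \(h=-\tau\), and it becomes \(P\tau(\tau+1)-(nP-1)\tau-\lambda_\ell+n(p-2)\tau=P\tau^2-(n-p)\tau-\lambda_\ell\), which vanishes by \eqref{eq:infinity-indicial}. Here we use \(-nP+1+n(p-2)+P=-(n-p)\).

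The expression is a quadratic polynomial in \(t\) vanishing at \(t=0\) and \(t=1\), so it equals \(c\,t(1-t)\) for a constant \(c\). Reading off the coefficient of \(t^2\) gives
\[
-c=P\sigma^2-Pa\sigma+n(p-2)\sigma .
\]
After dividing by \(Pa^2\) we get \(-c/(Pa^2)=m(m-1)+n(p-2)m/p=m(m+\delta-1)\). This yields the term \(-V_\ell z\) with \(V_\ell=m_\ell(m_\ell+\delta-1)\).

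Finally, we verify that the first expression in \eqref{eq:Vell} agrees with this. Evaluating the \(t\)-linear coefficient gives \(-c=m\,P(2s+n)/p+n(p-2)Ps/p^2\) after using the indicial relations, so both expressions equal the same quantity. All steps are reversible because \(g_\ell>0\) and \(r\mapsto t\) is a diffeomorphism of \((0,\infty)\) onto \((0,1)\), which gives the claimed equivalence.
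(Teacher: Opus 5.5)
Your argument is the paper's proof: the same substitution via \eqref{eq:first-derivative}--\eqref{eq:second-derivative}, the same checks of the $z_{tt}$ and $z_t$ coefficients, and the same observation that the zeroth-order quadratic vanishes at $t=0$ and $t=1$ by \eqref{eq:origin-indicial} and \eqref{eq:infinity-indicial}. The one variation is a genuine improvement. You read off the constant from the $t^2$-coefficient, which gives $m(m+\delta-1)$ without further input. Vanishing at both endpoints forces the $t$- and $t^2$-coefficients to be negatives of each other, so agreement with the first expression in \eqref{eq:Vell} comes for free. The paper instead reads off the $t$-coefficient and then verifies the second form separately, using the difference of the indicial equations \eqref{eq:indicial-difference-for-V}.

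However, the step you single out as the main point contains a sign error. Since $-P\sigma A=-Pa\sigma t+Pa\sigma t^2$, the $t^2$-coefficient is $P\sigma^2+Pa\sigma+n(p-2)\sigma$, not $P\sigma^2-Pa\sigma+n(p-2)\sigma$. Your next identity $m(m-1)+n(p-2)m/p=m(m+\delta-1)$ is also false, since the left side equals $m(m+\delta-3)$; the two mistakes cancel. With the correct sign you get $-c/(Pa^2)=m(m+1)+n(p-2)m/p=m(m+\delta-1)$, as required. In your last paragraph the quantity you display should be $-c/(Pa^2)$, not $-c$. Also, the $t$-coefficient needs only $Pa=P+1$, not the indicial relations: it is $-P\sigma(2s+n)-n(p-2)s$ directly. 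With these corrections the proof is complete.
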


\begin{proof}
We retain the abbreviations \(s=s_\ell\), \(\tau=\tau_\ell\), \(\sigma=s+\tau\), and \(m=m_\ell=\sigma/a\), and set
\[
L(t)\coloneq nP-1-n(p-2)t.
\]
Multiplying \eqref{eq:radial-normalized-equation} by \(r^2/g_\ell\), and using
\[
\frac{r^a}{1+r^a}=t,
\quad
\frac{r^a}{(1+r^a)^2}=t(1-t),
\]
gives
\begin{align}
  0=
  P\left[
  A^2z_{tt}
  +A(2h-1+A_t)z_t
  +\left(h(h-1)-\sigma A\right)z
  \right]+L(t)(hz+Az_t)-\lambda_\ell z
  +\widehat\mu t(1-t)z.
  \label{eq:pre-Jacobi-substitution}
\end{align}
We now consider the three coefficients separately.

Since \(A=at(1-t)\), the coefficient of \(z_{tt}\) is
\begin{equation}\label{eq:ztt-coefficient}
  PA^2=Pa^2t^2(1-t)^2.
\end{equation}

The coefficient of \(z_t\) is
\[
A\left[P(2h-1+A_t)+L(t)\right].
\]
Using
\[
h=s-\sigma t,
\quad
A_t=a(1-2t),
\quad
Pa=p,
\]
we find
\begin{align*}
  P(2h-1+A_t)+L(t)
  &=
  P(2s-1+a)+nP-1-
  \left[2P\sigma+2Pa+n(p-2)\right]t\\
  &=P(2s+n)
  -\left[2P\sigma+2p+n(p-2)\right]t.
\end{align*}
Consequently,
\begin{align}
  A\left[P(2h-1+A_t)+L(t)\right]
  =
  Pa^2t(1-t)
  \left[
  \frac{P(2s+n)}{p}
  -\left(
  \frac{2P\sigma}{p}+2+\frac{n(p-2)}{p}
  \right)t
  \right].
  \label{eq:zt-coefficient}
\end{align}
By the definitions of \(\alpha_\ell\), \(\beta_\ell\), \(m\), and \(\delta\),
\[
\frac{P(2s+n)}{p}=\alpha_\ell+1,
\quad
\frac{2P\sigma}{p}+2+\frac{n(p-2)}{p}
=2m+\delta
=\alpha_\ell+\beta_\ell+2.
\]
Thus \eqref{eq:zt-coefficient} is precisely
\[
Pa^2t(1-t)
\left[
\alpha_\ell+1
-(\alpha_\ell+\beta_\ell+2)t
\right].
\]

It remains to simplify the coefficient of \(z\).  Put
\begin{equation}\label{eq:Q-definition}
  Q(t)\coloneq 
  P\left[h(h-1)-\sigma A\right]
  +L(t)h-\lambda_\ell
  +\widehat\mu t(1-t).
\end{equation}
This is a polynomial of degree at most two.  Its constant term is
\begin{align*}
  Q(0)
  =P(s^2-s)+(nP-1)s-\lambda_\ell=Ps^2+Ds-\lambda_\ell=0
\end{align*}
by \eqref{eq:origin-indicial-simplified}.  At \(t=1\), one has \(A(1)=0\), \(h(1)=-\tau\), and \(L(1)=n-1\), whence
\begin{align*}
  Q(1)
  =P\tau(\tau+1)-(n-1)\tau-\lambda_\ell=P\tau^2-(n-p)\tau-\lambda_\ell=0
\end{align*}
by \eqref{eq:infinity-indicial}.  Therefore \(Q(t)=q_1t(1-t)\), where \(q_1\) is the coefficient of \(t\) in \(Q\).  Expanding \eqref{eq:Q-definition} gives
\begin{align*}
  q_1
  &=P\sigma(1-2s-a)-(nP-1)\sigma
    -n(p-2)s+\widehat\mu\\
  &=\widehat\mu-P\sigma(2s+n)-n(p-2)s.
\end{align*}
By \eqref{eq:Vell},
\begin{equation}\label{eq:V-from-z-coefficient}
  Pa^2V_\ell
  =P\sigma(2s+n)+n(p-2)s.
\end{equation}
Then the coefficient of \(z\) is
\begin{equation}\label{eq:z-coefficient}
  Q(t)
  =Pa^2t(1-t)
  \left(
  \frac{\widehat\mu}{Pa^2}-V_\ell
  \right).
\end{equation}
Since \(a=p/P\) and \(C_{n,p}=(p^2/P)B^{p-2}\), we have
\[
\frac{\widehat\mu}{Pa^2}=\frac{\mu}{C_{n,p}}.
\]

For completeness, we verify the second expression for \(V_\ell\). Subtracting \eqref{eq:origin-indicial-simplified} and \eqref{eq:infinity-indicial} gives
\begin{equation}\label{eq:indicial-difference-for-V}
  P(s-\tau)\sigma+Ds+(n-p)\tau=0.
\end{equation}
Equation \eqref{eq:indicial-difference-for-V} implies
\[
P\sigma(2s+n)+n(p-2)s
=P\sigma^2+(nP-n+p)\sigma.
\]
By \eqref{eq:V-from-z-coefficient}, dividing by \(Pa^2=p^2/P\), and using \(am=\sigma\), \(\delta=2+\frac{n(p-2)}{p}\), yields
\[
V_\ell
=m(m+\delta-1).
\]

Finally, substituting \eqref{eq:ztt-coefficient}, \eqref{eq:zt-coefficient}, and \eqref{eq:z-coefficient} into \eqref{eq:pre-Jacobi-substitution} and dividing by
\(Pa^2t(1-t)\) gives \eqref{eq:jacobi-equation}.
\end{proof}

Define the Jacobi differential expression
\begin{equation}\label{eq:Jacobi-expression}
  J_\ell z
  \coloneq 
  -t(1-t)z''
  -
  \left[
  \alpha_\ell+1
  -(\alpha_\ell+\beta_\ell+2)t
  \right]z'.
\end{equation}
Then \eqref{eq:jacobi-equation} becomes
\begin{equation}\label{eq:shifted-Jacobi-formal}
  C_{n,p}(J_\ell+V_\ell)z=\mu z.
\end{equation}
It remains to prove that the transformed closed form is unitarily equivalent to the closed form defining \(T_\ell\), with the endpoint conditions determined by \eqref{eq:radial-cores}.

\section{Form-level Jacobi identification and endpoint closure}
\label{sec:form-identification}

\subsection{The unitary map and the exact transformed core}
Set
\begin{equation}\label{eq:Jacobi-spaces}
  \varpi_\ell(t)\coloneq t^{\alpha_\ell}(1-t)^{\beta_\ell},
  \quad
  \pi_\ell(t)\coloneq t^{\alpha_\ell+1}(1-t)^{\beta_\ell+1},
\end{equation}
and
\[
 \mathscr{J}_\ell
 \coloneq 
 L^2\left((0,1),\varpi_\ell(t)\dd t\right).
\]
The change of variables in \eqref{eq:gauge-and-t} gives the exact norm identity
\begin{equation}\label{eq:norm-transform}
 \int_0^\infty
 \rho(r)|g_\ell(r)z(t(r))|^2r^{n-1}\dd r
 =
 \frac{1}{a}
 \int_0^1|z(t)|^2\varpi_\ell(t)\dd t.
\end{equation}
Thus
\begin{equation}\label{eq:unitary-U}
 (\mathcal{U}_\ell z)(r)
 \coloneq 
 \sqrt{a}\,g_\ell(r)z(t(r))=\sqrt{a}f(r)
\end{equation}
defines a unitary map from \(\mathscr{J}_\ell\) onto \(\mathscr{K}_v\).

The preimage under \(\mathcal{U}_\ell\) of the radial core \eqref{eq:radial-cores} is
\begin{equation}\label{eq:transformed-cores}
  \mathcal{G}_\ell
  \coloneq 
  \begin{cases}
    \left\{
    z\in C^\infty([0,1)):
    \begin{array}{l}
    z(t)=c(1-t)^{-m_0}\text{ near }t=0,\\
    z(t)=0\text{ near }t=1
    \end{array}
    \right\},
    &\ell=0,\\[5mm]
    C_c^\infty(0,1),&\ell\geq 1.
  \end{cases}
\end{equation}
For \(\ell=0\), \(g_0=(1-t)^{m_0}\); hence \(\mathcal{U}_0 z\) is constant near \(r=0\) if and only if \(z(t)=c(1-t)^{-m_0}\) near \(t=0\).  For \(\ell\geq 1\), the map
\(r\mapsto t\) is a smooth diffeomorphism from \((0,\infty)\) onto \((0,1)\), and \(g_\ell\) is smooth and positive there.  Therefore
\[
\mathcal{U}_\ell^{-1}C_c^\infty(0,\infty)=C_c^\infty(0,1).
\]
The factor \(\sqrt{a}\) is absorbed into the constant \(c\).

On \(\mathcal{G}_\ell\) define
\begin{equation}\label{eq:Jacobi-form}
  \mathfrak{j}_\ell[z,y]
  \coloneq 
  \int_0^1
  z'(t)y'(t)\pi_\ell(t)\dd t.
\end{equation}

\begin{proposition}
\label{prop:exact-form-identity}
For \(z,y\in\mathcal{G}_\ell\),
\begin{equation}\label{eq:exact-form-identity}
  \mathfrak{b}_\ell[\mathcal{U}_\ell z,\mathcal{U}_\ell y]
  =
  C_{n,p}
  \left(
  \mathfrak{j}_\ell[z,y]
  +V_\ell\ip{z}{y}_{\mathscr{J}_\ell}
  \right).
\end{equation}
Consequently, the closure from \(\mathcal{D}_\ell^{\mathrm{rad}}\) of \(\mathfrak{b}_\ell\) is unitarily equivalent to the closure from \(\mathcal{G}_\ell\) of \(C_{n,p}(\mathfrak{j}_\ell+V_\ell)\).
\end{proposition}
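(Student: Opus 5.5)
The plan is to prove \eqref{eq:exact-form-identity} by a direct computation plus one integration by parts, and then deduce the closure statement from the unitarity of \(\mathcal{U}_\ell\). Put \(f=\mathcal{U}_\ell z\) and \(g=\mathcal{U}_\ell y\), so that \(f=\sqrt{a}\,g_\ell\,z(t)\). By \eqref{eq:first-derivative},
\[
rf'=\sqrt a\, g_\ell(hz+Az_t),
\]
and similarly for \(g\). We substitute this into \eqref{eq:radial-form}, written as
\[
\int_0^\infty w\bigl(P(rf')(rg')+\lambda_\ell fg\bigr)r^{n-3}\dd r,
\]
and pass to the variable \(t\) using \(\dd r/r=\dd t/A(t)\).

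The first step is to express every weight as a function of \(t\). We have \(r^a=t/(1-t)\) and \(1+r^a=(1-t)^{-1}\), and \(w\) is given by \eqref{eq:w-weight}. The combination
\[
w\,g_\ell^2\,r^{n-2}\,\frac{1}{A}\,A^2
\]
should collapse to \(B^{p-2}a^{-1}\,\pi_\ell(t)\) times \(a^2\) (up to the factor \(P\)). This requires checking the exponent identities
\[
\frac{(p-2)/P+2s+n-2}{a}=\alpha_\ell+1,
\qquad
\frac{n(p-2)}{p}+2m-\frac{(p-2)/P+2s+n-2}{a}=\beta_\ell+1.
\]
These are quick consequences of \eqref{eq:jacobi-parameters-intro} and \(\delta=2+n(p-2)/p\). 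The constant that emerges is \(PaB^{p-2}=C_{n,p}\), which matches the claimed normalization.

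The second step handles the remaining terms. The integrand decomposes into three parts: the \(z_ty_t\) term just computed, a cross term with integrand \(\omega(t)\,h\,(zy)_t\), and a zeroth-order term with integrand \(\omega(t)\bigl(Ph^2+\lambda_\ell\bigr)zy/A^2\), where \(\omega\) is the common weight, proportional to \(\pi_\ell/t(1-t)\). We integrate the cross term by parts, which turns it into a multiple of \(zy\) with coefficient \(-(\omega h)_t\). This is precisely the computation of Proposition \ref{prop:jacobi-ode} run at the level of forms: the total zeroth-order coefficient is \(-Q(t)\) without the \(\widehat\mu\) term, divided by \(A\). Since \(Q(0)=Q(1)=0\) by the indicial equations \eqref{eq:origin-indicial-simplified} and \eqref{eq:infinity-indicial}, that coefficient reduces to \(C_{n,p}V_\ell\varpi_\ell\). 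A cleaner route is to first verify the identity for \(z\in C_c^\infty(0,1)\). In that case Green's formula gives
\[
\mathfrak b_\ell[f,g]=\ip{\tau f}{g}_{\mathscr K_v},
\]
where \(\tau\) is the differential expression \eqref{eq:radial-divergence-equation}. Proposition \ref{prop:jacobi-ode} says \(\mathcal U_\ell^{-1}\tau\,\mathcal U_\ell=C_{n,p}(J_\ell+V_\ell)\), and integrating \(J_\ell\) by parts against \(\varpi_\ell\) yields \(\mathfrak j_\ell\), because \((\pi_\ell z')'/\varpi_\ell=-J_\ell z\).

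The main obstacle is the case \(\ell=0\), where \(\mathcal G_0\) contains functions that do not vanish at \(t=0\), so the boundary terms at \(t=0\) must be checked. Near \(t=0\) the function \(f\) is constant, so \(f'=0\) there and the radial integral has no boundary term at \(r=0\). The boundary term in the Jacobi-side integration by parts involves \(\pi_0 z'y\sim t^{\alpha_0+1}\), which vanishes since \(\alpha_0>-1\) by Lemma \ref{lem:parameter-bounds}. The only delicate point is the boundary contribution \(\omega h\,zy\) at \(t=0\) with \(h(0)=s_0=0\). I would verify that this vanishes using the explicit factor \(t^{\alpha_0+1}\) and \(s_0=0\). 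Alternatively, I would redo the identity directly on \(\mathcal G_0\) by computing both sides as limits of integrals over \((\varepsilon,1)\) and showing that the boundary terms tend to zero.

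Finally, \eqref{eq:exact-form-identity} together with \eqref{eq:norm-transform} shows that \(\mathcal U_\ell\) is an isometry from \(\mathcal G_\ell\) with the norm of \(C_{n,p}(\mathfrak j_\ell+V_\ell)\) onto \(\mathcal D^{\mathrm{rad}}_\ell\) with the norm \(\|\cdot\|_{\mathfrak b_\ell}\). These norms are equivalent up to constants because \(V_\ell\ge 0\). Since \(\mathcal U_\ell\) is unitary on the ambient spaces, it extends to the closures, and the closed forms are therefore unitarily equivalent.
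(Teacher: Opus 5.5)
Your argument is correct, and its main line is the paper's proof. You substitute \(rf'=\sqrt a\,g_\ell(hz+Az')\), pass to \(t\), integrate the mixed term by parts, and use both indicial equations to make the zeroth-order quadratic vanish at \(t=0,1\). You kill the \(t=0\) boundary term for \(\ell=0\) through \(s_0=0\) and \(\alpha_0>-1\), and you extend to the closures by unitarity. The genuinely different ingredient is your ``cleaner route''. It applies Green's formula on both sides and reuses the pointwise conjugation \(\mathcal T_\ell\,\cU_\ell z=C_{n,p}\,\cU_\ell(J_\ell+V_\ell)z\), where \(\mathcal T_\ell\) is the expression in \eqref{eq:radial-divergence-equation}. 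The paper does not do this: it verifies \(W\equiv V_\ell\) directly at the form level through the quadratic \(N(t)\), computing its \(t\)-coefficient by hand. The conjugation is not literally the statement of \cref{prop:jacobi-ode}, which asserts an equivalence of eigen-equations. It does follow from that proof, because the identity there is linear in \(\widehat\mu\) and \(r^2\rho/w=B^{2-p}t(1-t)\). Your route is shorter and inherits the constant \(C_{n,p}\) without recomputation. It also explains why the two computations agree: one checks \(PN(t)=-\bigl(Q(t)-\widehat\mu\,t(1-t)\bigr)\), so your identification of the form-level zeroth-order coefficient with \(-Q/A\) at \(\widehat\mu=0\) is right. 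The paper's route is self-contained at the form level and does not lean on the operator computation. Your boundary analysis on \(\mathcal G_0\) for the Green's-formula route is complete: \(f'=0\) near \(r=0\), and \(\pi_0z'y=O(t^{\alpha_0+1})\) at \(t=0\).

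The bookkeeping in your direct-route sketch needs correcting, though none of it affects the cleaner route.
\begin{itemize}
\item The exponents of \(t\) and \(1-t\) in \(wg_\ell^2r^{n-2}\) are \(\alpha_\ell\) and \(\beta_\ell\), as in \eqref{eq:energy-weight-transform}. The shift to \(\alpha_\ell+1,\beta_\ell+1\) comes from the factor \(A=at(1-t)\), so both exponent identities you wrote are false as stated.
\item The constant is \(Pa^2B^{p-2}=C_{n,p}\). You dropped the factor \(a=(\sqrt a)^2\) coming from \(\cU_\ell\), and in fact \(PaB^{p-2}=pB^{p-2}\neq C_{n,p}\).
\item Normalize \(\omega\) so that the cross term is \(\omega h(zy)'\). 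Then the zeroth-order integrand is \(\omega(h^2+\lambda_\ell/P)zy/A\), not \(\omega(Ph^2+\lambda_\ell)zy/A^2\).
\end{itemize}
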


\begin{proof}
Write \(G(r)\coloneq g_\ell(r)\), and set
\[
f_z\coloneq \mathcal{U}_\ell z=\sqrt{a}\,G(r)z(t),
\quad
f_y\coloneq \mathcal{U}_\ell y=\sqrt{a}\,G(r)y(t).
\]
By \eqref{eq:first-derivative},
\begin{equation}\label{eq:form-transformed-derivatives}
  r f_z'
  =\sqrt{a}\,G(hz+Az'),
  \quad
  r f_y'
  =\sqrt{a}\,G(hy+Ay'),
\end{equation}
where here and below primes on \(z\) and \(y\) denote differentiation with respect to \(t\). 

We first record the common weight produced by this substitution.  A direct use of \eqref{eq:w-weight}, \(G=r^s(1+r^a)^{-m}\), and \(r^a=t/(1-t)\) gives
\begin{align}
  w(r)G(r)^2r^{n-2}
  &=
  B^{p-2}
  r^{(p-2)/P+2s+n-2}
  (1+r^a)^{-n(p-2)/p-2m}\notag\\
  &=B^{p-2}t^{\alpha_\ell}(1-t)^{\beta_\ell}
  =B^{p-2}\varpi_\ell(t).
  \label{eq:energy-weight-transform}
\end{align}
Indeed, the exponent of \(t\) is
\[
\frac{1}{a}\left(\frac{p-2}{P}+2s+n-2\right)
=\frac{P(2s+n)}{p}-1
=\alpha_\ell,
\]
and the exponent of \(1-t\) is
\[
\frac{n(p-2)}{p}+2m-\alpha_\ell=\beta_\ell.
\]

Substituting \eqref{eq:form-transformed-derivatives} into \eqref{eq:radial-form}, changing variables from \(r\) to \(t\), and using \eqref{eq:energy-weight-transform}, we obtain
\begin{align}
  \mathfrak{b}_\ell[f_z,f_y]
  =&C_{n,p}\int_0^1
  \biggl\{
  \pi_\ell z'y'
  +\frac{h}{a}\varpi_\ell
        (z y)'
  +\frac{\varpi_\ell}{aA}
    \left(h^2+\frac{\lambda_\ell}{P}\right)
    z y
  \biggr\}\dd t.
  \label{eq:form-before-mixed-integration}
\end{align}
Here we used
\[
 C_{n,p}=Pa^2B^{p-2},
 \quad
 \pi_\ell=t(1-t)\varpi_\ell=\frac{A}{a}\varpi_\ell.
\]

We now integrate the mixed term in \eqref{eq:form-before-mixed-integration}.  Its boundary contribution is
\begin{equation}\label{eq:mixed-boundary-term}
  \left[
  \frac{h(t)}{a}\varpi_\ell(t)z(t)y(t)
  \right]_{t=0}^{t=1}.
\end{equation}
It vanishes at \(t=1\), since every element of \(\mathcal{G}_\ell\) vanishes near \(1\).  If \(\ell\geq 1\), it also vanishes at \(t=0\) because \(\mathcal{G}_\ell=C_c^\infty(0,1)\).  If \(\ell=0\), then \(s_0=0\), so \(h(t)=-\sigma t\) near \(0\), while \(z\) and \(y\) are bounded there.  Hence the expression in
\eqref{eq:mixed-boundary-term} is \(O(t^{\alpha_0+1})\), which tends to zero because \(\alpha_0>-1\) by \Cref{lem:parameter-bounds}.

After this integration by parts, the coefficient of \(\varpi_\ell zy\) is
\begin{equation}\label{eq:transformed-zero-order-W}
  W(t)
  \coloneq 
  \frac{h^2+\lambda_\ell/P}{aA}
  -\frac{1}{a}
  \left(
  h'+h\frac{\varpi_\ell'}{\varpi_\ell}
  \right).
\end{equation}
We claim that \(W(t)\equiv V_\ell\).  Since
\[
h'=-\sigma,
\quad
\frac{\varpi_\ell'}{\varpi_\ell}
=\frac{\alpha_\ell}{t}
-\frac{\beta_\ell}{1-t},
\]
multiplication of \eqref{eq:transformed-zero-order-W} by \(aA\) gives the quadratic polynomial
\begin{equation}\label{eq:N-form-potential}
  N(t)
  \coloneq 
  h^2+\frac{\lambda_\ell}{P}
  +a\sigma t(1-t)
  -ah\left[
  \alpha_\ell(1-t)-\beta_\ell t
  \right].
\end{equation}
At the left endpoint,
\[
N(0)=s^2+\frac{\lambda_\ell}{P}-as\alpha_\ell=0.
\]
Indeed, \(a\alpha_\ell=2s+n-a\), and the last equality is precisely \(Ps^2+Ds-\lambda_\ell=0\), since \(D/P=n-a\). At the right endpoint,
\[
N(1)=\tau^2+\frac{\lambda_\ell}{P}-a\tau\beta_\ell=0.
\]
Here
\[
a\beta_\ell=2\tau-\frac{n-p}{P},
\]
so the last equality is exactly \(P\tau^2-(n-p)\tau-\lambda_\ell=0\). Thus \(N\) is a quadratic polynomial vanishing at both \(0\) and \(1\).

The coefficient of \(t\) in \eqref{eq:N-form-potential} is
\begin{align*}
  -2s\sigma+a\sigma
  +a\left[s(\alpha_\ell+\beta_\ell)
            +\sigma\alpha_\ell\right]=
  \sigma(2s+n)+\frac{n(p-2)}{P}s
  =a^2V_\ell.
\end{align*}
For the first equality we used
\[
\alpha_\ell+\beta_\ell=2m+\frac{n(p-2)}{p},
\quad
\alpha_\ell=\frac{P(2s+n)}{p}-1,
\quad
m=\frac{\sigma}{a};
\]
the last equality follows from \eqref{eq:Vell}.  Consequently,
\[
N(t)=a^2V_\ell t(1-t),
\]
and hence \(W(t)=V_\ell\), as claimed.

Returning to \eqref{eq:form-before-mixed-integration}, we have proved
\[
\mathfrak{b}_\ell[f_z,f_y]
=
C_{n,p}
\int_0^1
\left(
z'y'\,\pi_\ell
+V_\ell zy\,\varpi_\ell
\right)\dd t,
\]
which is \eqref{eq:exact-form-identity}.

Finally, \(\mathcal{U}_\ell\) is unitary from \(\mathscr{J}_\ell\) onto \(\mathscr{K}_v\), and
\[
\mathcal{U}_\ell\mathcal G_\ell=\mathcal D_\ell^{\mathrm{rad}}.
\]
By \eqref{eq:norm-transform} and \eqref{eq:exact-form-identity}, for every \(z\in\mathcal G_\ell\),
\[
\|\mathcal{U}_\ell z\|_{\mathfrak{b}_\ell}^{2}
=
\|z\|_{\mathscr{J}_\ell}^{2}
+
C_{n,p}
\left(
\mathfrak{j}_\ell[z,z]
+
V_\ell\|z\|_{\mathscr{J}_\ell}^{2}
\right).
\]
Consequently, a sequence in \(\mathcal G_\ell\) is Cauchy with respect to the norm on the right-hand side if and only if its image under \(\mathcal{U}_\ell\) is Cauchy with respect to \(\|\cdot\|_{\mathfrak{b}_\ell}\).  Hence \(\mathcal{U}_\ell\) extends to the corresponding completions, and \eqref{eq:exact-form-identity} extends to the closed forms.
\end{proof}

\subsection{Why Jacobi polynomials satisfy the selected endpoint conditions}

The formal solutions of \eqref{eq:shifted-Jacobi-formal} that are
polynomials are
\begin{equation}\label{eq:shifted-Jacobi-polynomials}
 Z_{\ell,k}(t)
 \coloneq 
 P_k^{(\alpha_\ell,\beta_\ell)}(1-2t),
 \quad k\in\mathbb{N}_0,
\end{equation}
and they satisfy
\begin{equation}\label{eq:Jacobi-polynomial-eigenvalue}
 J_\ell Z_{\ell,k}
 =
 k(k+\alpha_\ell+\beta_\ell+1)Z_{\ell,k}.
\end{equation}

\begin{lemma}
\label{lem:polynomial-form-domain}
Every polynomial \(Z\) on \([0,1]\) belongs to the completion of
\(\mathcal{G}_\ell\) in the norm
\begin{equation}\label{eq:Jacobi-form-norm}
 \|z\|_{\mathscr{J}_\ell}^2+\mathfrak{j}_\ell[z,z].
\end{equation}
\end{lemma}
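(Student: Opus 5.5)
We approximate a given polynomial \(Z\) by elements of \(\mathcal{G}_\ell\), obtained by multiplying \(Z\) by cutoffs near the two endpoints. The cutoff at \(t=1\) is handled the same way for all \(\ell\). The cutoff at \(t=0\) is needed only for \(\ell\geq1\), where \(\mathcal{G}_\ell=C_c^\infty(0,1)\). For \(\ell=0\) we instead splice \(Z\) to a multiple of \((1-t)^{-m_0}\) near \(0\). Throughout, the weights are \(\varpi_\ell=t^{\alpha_\ell}(1-t)^{\beta_\ell}\) and \(\pi_\ell=t^{\alpha_\ell+1}(1-t)^{\beta_\ell+1}\). By \cref{lem:parameter-bounds}, \(\alpha_\ell>-1\) and \(\beta_\ell>0\), and \(\alpha_\ell>0\) when \(\ell\geq1\). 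Since \(\varpi_\ell\) is integrable, \(Z\) and \(Z'\) are bounded, so \(Z\) has finite norm \eqref{eq:Jacobi-form-norm}.

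\emph{Endpoint \(t=1\).} The weight \(\pi_\ell\) behaves like \((1-t)^{\beta_\ell+1}\) with \(\beta_\ell+1>1\), so a linear cutoff suffices. Let \(\chi_\varepsilon(t)=\eta((1-t)/\varepsilon)\), where \(\eta\) is smooth, \(\eta=0\) on \([0,1]\) and \(\eta=1\) on \([2,\infty)\). The derivative satisfies \(|\chi_\varepsilon'|\lesssim\varepsilon^{-1}\) on a set of length \(\varepsilon\). Hence
\[
\int |Z|^2|\chi_\varepsilon'|^2\pi_\ell\,\dd t\lesssim \varepsilon^{-2}\varepsilon^{\beta_\ell+2}=\varepsilon^{\beta_\ell}\to0.
\]
The terms \(\int|1-\chi_\varepsilon|^2(|Z|^2\varpi_\ell+|Z'|^2\pi_\ell)\,\dd t\) tend to zero by dominated convergence. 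Thus \(Z\chi_\varepsilon\to Z\) in the norm \eqref{eq:Jacobi-form-norm}.

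\emph{Endpoint \(t=0\) for \(\ell\geq1\).} The same scaling works because \(\alpha_\ell>0\). Take \(\psi_\varepsilon(t)=\eta(t/\varepsilon)\). The gradient error is bounded by \(\varepsilon^{-2}\int_\varepsilon^{2\varepsilon}t^{\alpha_\ell+1}\,\dd t\sim\varepsilon^{\alpha_\ell}\to0\). The remaining terms tend to zero by dominated convergence. Then \(Z\chi_\varepsilon\psi_\varepsilon\in C_c^\infty(0,1)\), and letting \(\varepsilon\to0\) gives the claim.

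\emph{Endpoint \(t=0\) for \(\ell=0\).} Here \(\alpha_0\in(-1,\infty)\) may be nonpositive, so a plain cutoff to zero can fail; in the \(n\)-dimensional picture, a point has zero capacity only in a suitable sense. This is the main step. We avoid cutting off to zero. Set
\[
z_\varepsilon=(1-\psi_\varepsilon)\,Z(0)(1-t)^{-m_0}+\psi_\varepsilon Z.
\]
Near \(0\), \(z_\varepsilon\) equals \(Z(0)(1-t)^{-m_0}\), so after the cutoff at \(1\) it lies in \(\mathcal{G}_0\).

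\emph{Error estimate for \(\ell=0\).} The difference is \(z_\varepsilon-Z=(1-\psi_\varepsilon)E\), where \(E=Z(0)(1-t)^{-m_0}-Z\) is smooth on \([0,1)\) with \(E(0)=0\), so \(|E|\lesssim t\). Its derivative is \((1-\psi_\varepsilon)E'-\psi_\varepsilon'E\). The first piece tends to zero in \(L^2(\pi_0)\) by dominated convergence. For the second,
\[
\int|\psi_\varepsilon'|^2|E|^2\pi_0\,\dd t\lesssim \varepsilon^{-2}\varepsilon^{2}\varepsilon^{\alpha_0+2}=\varepsilon^{\alpha_0+2}\to0.
\]
The \(L^2(\varpi_0)\) error is similarly \(O(\varepsilon^{\alpha_0+3})\). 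Combining this with the cutoff at \(t=1\) (the supports are disjoint for small \(\varepsilon\)) proves the lemma.
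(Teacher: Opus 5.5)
Your proposal is correct and follows essentially the same route as the paper. For $\ell\geq1$ you multiply by the endpoint cutoffs $\chi(t/\varepsilon)$ and $\chi((1-t)/\varepsilon)$, whose gradient errors are of order $\varepsilon^{\alpha_\ell}$ and $\varepsilon^{\beta_\ell}$. For $\ell=0$ you use the same splice $Z+(1-\chi(t/\varepsilon))\bigl(Z(0)(1-t)^{-m_0}-Z\bigr)$, with errors $O(\varepsilon^{\alpha_0+2})$ and $O(\varepsilon^{\alpha_0+3})$. The only difference is cosmetic: you handle the terms not involving the cutoff derivative by dominated convergence rather than by explicit rates.
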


\begin{proof}
Choose \(\chi\in C^\infty(\mathbb{R})\) such that
\[
 0\leq\chi\leq 1,\quad
 \chi=0\ \text{on }(-\infty,1],\quad
 \chi=1\ \text{on }[2,\infty),
\]
and set
\[
 \chi_{0,\varepsilon}(t)
 \coloneq 
 \chi\left(\frac{t}{\varepsilon}\right),
 \quad
 \chi_{1,\varepsilon}(t)
 \coloneq 
 \chi\left(\frac{1-t}{\varepsilon}\right).
\]
Then
\[
 |\chi_{0,\varepsilon}'|
 +
 |\chi_{1,\varepsilon}'|
 \leq C\varepsilon^{-1}.
\]

Suppose first that \(\ell\geq 1\), and define
\[
 Z_\varepsilon
 \coloneq 
 \chi_{0,\varepsilon}\chi_{1,\varepsilon}Z
 \in C_c^\infty(0,1)=\mathcal{G}_\ell.
\]
Since \(Z,Z'\in L^\infty(0,1)\), at \(t=0\) we have
\begin{align*}
 \int_0^{2\varepsilon}
 |Z_\varepsilon-Z|^2\varpi_\ell\,\dd t
 &\leq
 C\int_0^{2\varepsilon}t^{\alpha_\ell}\dd t
 =
 O(\varepsilon^{\alpha_\ell+1}),\\
 \int_0^{2\varepsilon}
 |(Z_\varepsilon-Z)'|^2\pi_\ell\,\dd t
 &\leq
 C\left(\int_0^{2\varepsilon}t^{\alpha_\ell+1}\dd t
 +
 \frac{1}{\varepsilon^2}
 \int_\varepsilon^{2\varepsilon}
 t^{\alpha_\ell+1}\dd t\right)\\
 &=
 O(\varepsilon^{\alpha_\ell}).
\end{align*}
Similarly, at \(t=1\),
\begin{align*}
 \int_{1-2\varepsilon}^1
 |Z_\varepsilon-Z|^2\varpi_\ell\,\dd t
 &=
 O(\varepsilon^{\beta_\ell+1}),\\
 \int_{1-2\varepsilon}^1
 |(Z_\varepsilon-Z)'|^2\pi_\ell\,\dd t
 &=
 O(\varepsilon^{\beta_\ell}).
\end{align*}
Therefore,
\[
 \|Z_\varepsilon-Z\|_{\mathscr{J}_\ell}^2
 +
 \mathfrak{j}_\ell[Z_\varepsilon-Z,Z_\varepsilon-Z]
 \leq
 C\left(
 \varepsilon^{\alpha_\ell}
 +
 \varepsilon^{\beta_\ell}
 \right)
 \to 0,
\]
because \(\alpha_\ell,\beta_\ell>0\) by \Cref{lem:parameter-bounds}.

Now let \(\ell=0\).  Set
\[
 Z_*(t)\coloneq Z(0)(1-t)^{-m_0}.
\]
Since \(Z_*(0)=Z(0)\),
\[
 Z_*-Z=O(t),
 \quad
 Z_*'-Z'=O(1)
 \quad (t\downarrow0).
\]
Define
\[
 \widetilde Z_\varepsilon
 \coloneq 
 Z+
 (1-\chi_{0,\varepsilon})(Z_*-Z).
\]
Then
\[
 \widetilde Z_\varepsilon=Z_*
 \quad\text{near }0,
 \quad
 \widetilde Z_\varepsilon=Z
 \quad\text{for }t\geq 2\varepsilon.
\]
Moreover,
\begin{align*}
 \|\widetilde Z_\varepsilon-Z\|_{\mathscr{J}_0}^2
 &\leq
 C\int_0^{2\varepsilon}t^{\alpha_0+2}\dd t
 =
 O(\varepsilon^{\alpha_0+3}),\\
 \mathfrak{j}_0[
 \widetilde Z_\varepsilon-Z,
 \widetilde Z_\varepsilon-Z]
 &\leq
 C\int_0^{2\varepsilon}t^{\alpha_0+1}\dd t
 =
 O(\varepsilon^{\alpha_0+2}).
\end{align*}

Finally, put
\[
Z_\varepsilon
\coloneq 
\chi_{1,\varepsilon}\widetilde Z_\varepsilon.
\]
Then \(Z_\varepsilon\in\mathcal{G}_0\), and the right-endpoint estimates give
\[
\|Z_\varepsilon-Z\|_{\mathscr{J}_0}^2
+
\mathfrak{j}_0[Z_\varepsilon-Z,Z_\varepsilon-Z]
\leq
C\left(
\varepsilon^{\alpha_0+2}
+
\varepsilon^{\beta_0}
\right)
\to 0,
\]
because \(\alpha_0>-1\) and \(\beta_0>0\) by \Cref{lem:parameter-bounds}.  Hence \(Z\) belongs to the form closure of \(\mathcal{G}_\ell\) for every \(\ell\geq 0\).
\end{proof}

\begin{proposition}
\label{prop:operator-domain-eigenfunctions}
For every \(\ell,k\geq 0\), \(Z_{\ell,k}\) belongs to the closed
transformed form domain and
\begin{equation}\label{eq:weak-Jacobi-eigenidentity}
  \mathfrak{j}_\ell[Z_{\ell,k},y]
  =
  k(k+\alpha_\ell+\beta_\ell+1)
  \ip{Z_{\ell,k}}{y}_{\mathscr{J}_\ell}
\end{equation}
for every \(y\) in that form domain.  Consequently, \(\mathcal{U}_\ell Z_{\ell,k}\in\cD(T_\ell)\) and
\begin{equation}\label{eq:T-eigenvalue-preliminary}
  T_\ell\mathcal{U}_\ell Z_{\ell,k}
  =
  C_{n,p}
  \left[
  V_\ell+k(k+\alpha_\ell+\beta_\ell+1)
  \right]\mathcal{U}_\ell Z_{\ell,k}.
\end{equation}
\end{proposition}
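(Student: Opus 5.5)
By \Cref{lem:polynomial-form-domain}, each \(Z_{\ell,k}\), being a polynomial, lies in the closed transformed form domain. The main task is therefore to prove the weak identity \eqref{eq:weak-Jacobi-eigenidentity} for every \(y\) in that domain. Since both sides are continuous in \(y\) with respect to the form norm \eqref{eq:Jacobi-form-norm} (Cauchy--Schwarz for \(\mathfrak{j}_\ell\) and for \(\ip{\cdot}{\cdot}_{\mathscr J_\ell}\)), and the domain is by definition the closure of \(\mathcal G_\ell\), it suffices to prove the identity for \(y\in\mathcal G_\ell\).

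For \(y\in\mathcal G_\ell\), I will integrate by parts. The Jacobi expression is in divergence form:
\[
\varpi_\ell J_\ell z=-(\pi_\ell z')'.
\]
This holds because \(\pi_\ell'/\varpi_\ell=(\alpha_\ell+1)(1-t)-(\beta_\ell+1)t\). Hence
\[
\int_0^1 Z_{\ell,k}'y'\pi_\ell\dd t
=\Bigl[\pi_\ell Z_{\ell,k}'y\Bigr]_0^1+\int_0^1 (J_\ell Z_{\ell,k})\,y\,\varpi_\ell\dd t .
\]
By \eqref{eq:Jacobi-polynomial-eigenvalue}, the last integral equals \(k(k+\alpha_\ell+\beta_\ell+1)\ip{Z_{\ell,k}}{y}_{\mathscr J_\ell}\).

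The boundary term is where the endpoint conditions enter. At \(t=1\) it vanishes because \(y\) vanishes near \(1\). For \(\ell\ge1\) it also vanishes at \(t=0\), since \(y\in C_c^\infty(0,1)\). The only delicate case is \(\ell=0\) at \(t=0\). There \(y(t)=c(1-t)^{-m_0}\) is bounded near \(0\), and \(Z_{0,k}'\) is bounded, so the term is \(O(t^{\alpha_0+1})\). It tends to \(0\) because \(\alpha_0>-1\) by \Cref{lem:parameter-bounds}. This endpoint check is the main point, though it is mild. This proves \eqref{eq:weak-Jacobi-eigenidentity}.

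Finally I transfer the result to \(T_\ell\). By \Cref{prop:exact-form-identity}, the closed form \(\mathfrak b_\ell\) corresponds under \(\cU_\ell\) to \(C_{n,p}(\mathfrak j_\ell+V_\ell)\) on the closure of \(\mathcal G_\ell\), and \(\cU_\ell\) is unitary. So for every \(g=\cU_\ell y\) in \(\cD_\ell(\mathfrak b_\ell)\) we get
\[
\mathfrak b_\ell[\cU_\ell Z_{\ell,k},g]=C_{n,p}\bigl(k(k+\alpha_\ell+\beta_\ell+1)+V_\ell\bigr)\ip{\cU_\ell Z_{\ell,k}}{g}_{\mathscr K_v}.
\]
By the representation theorem for closed forms, this says exactly that \(\cU_\ell Z_{\ell,k}\in\cD(T_\ell)\) and that \eqref{eq:T-eigenvalue-preliminary} holds.
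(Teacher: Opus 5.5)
Your proposal is correct and follows the paper's proof essentially step for step. You check the identity for $y\in\mathcal{G}_\ell$ by integrating by parts using $\varpi_\ell J_\ell z=-(\pi_\ell z')'$, with the boundary term at $t=0$ killed by $\pi_\ell=O(t^{\alpha_\ell+1})$ and $\alpha_\ell>-1$; you then extend to the whole closed form domain by continuity in the form norm, and conclude via the form identity under $\mathcal{U}_\ell$ and the representation theorem. The only differences are cosmetic: you put the density reduction first and explicitly verify the divergence-form identity, which the paper uses without comment.
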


\begin{proof}
By \cref{lem:polynomial-form-domain}, \(Z_{\ell,k}\) belongs to the domain of the closed transformed form.  For \(y\in\mathcal{G}_\ell\), integration by parts gives
\[
\mathfrak{j}_\ell[Z_{\ell,k},y]
=
-\int_0^1
(\pi_\ell Z_{\ell,k}')'y\,\dd t
+
\left[
\pi_\ell Z_{\ell,k}'y
\right]_{t=0}^{t=1}.
\]
The boundary term at \(t=1\) is zero because \(y\) vanishes there.  At \(t=0\), it is also zero, since
\[
\pi_\ell(t)=O(t^{\alpha_\ell+1}),
\quad
\alpha_\ell+1>0,
\]
and \(Z_{\ell,k}'\) and \(y\) are bounded near \(t=0\).  Therefore, using
\[
J_\ell Z_{\ell,k}
=
-\frac{1}{\varpi_\ell}
(\pi_\ell Z_{\ell,k}')'
=
k(k+\alpha_\ell+\beta_\ell+1)Z_{\ell,k},
\]
we obtain
\[
\mathfrak{j}_\ell[Z_{\ell,k},y]
=
k(k+\alpha_\ell+\beta_\ell+1)
\ip{Z_{\ell,k}}{y}_{\mathscr{J}_\ell}
\]
for every \(y\in\mathcal{G}_\ell\).

The space \(\mathcal{G}_\ell\) is dense in \(\mathcal{U}_\ell^{-1}\mathcal{D}_\ell(\mathfrak{b}_\ell)\) with respect to the norm \(\|\cdot\|_{\mathscr{J}_\ell}^2+\mathfrak{j}_\ell [\cdot,\cdot]\), or equivalently,
\[
y\longmapsto\|\mathcal{U}_\ell y\|_{\mathfrak{b}_\ell}.
\]
For fixed \(Z_{\ell,k}\), both sides of the preceding identity are continuous linear functionals of \(y\) in this norm.  Therefore \eqref{eq:weak-Jacobi-eigenidentity} holds for every
\[
y\in\mathcal{U}_\ell^{-1}\mathcal D_\ell(\mathfrak b_\ell).
\]

Applying \cref{prop:exact-form-identity}, we consequently obtain
\[
\mathfrak{b}_\ell
[\mathcal{U}_\ell Z_{\ell,k},\mathcal{U}_\ell y]
=
C_{n,p}
\left[
V_\ell+k(k+\alpha_\ell+\beta_\ell+1)
\right]
\ip{\mathcal{U}_\ell Z_{\ell,k}}{\mathcal{U}_\ell y}_{\mathscr{K}_v}
\]
for every \(y\) in the domain of the closed transformed form.  Since \(\mathcal{U}_\ell\) maps this domain onto \(\mathcal{D}_\ell(\mathfrak{b}_\ell)\), the representation theorem implies that
\[
\mathcal{U}_\ell Z_{\ell,k}\in\mathcal{D}(T_\ell)
\]
and gives \eqref{eq:T-eigenvalue-preliminary}.
\end{proof}

\section{The complete two-index spectrum}
\label{sec:complete-spectrum}

\subsection{Eigenvalues and eigenfunctions}

By \eqref{eq:jacobi-parameters-intro},
\[
\alpha_\ell+\beta_\ell+1=\delta+2m_\ell-1.
\]
Together with \eqref{eq:Vell}, this reduces the eigenvalue in \eqref{eq:T-eigenvalue-preliminary} to
\begin{align}
  \mu_{\ell,k}
  &=
  C_{n,p}
  \left[
  m_\ell(m_\ell+\delta-1)
  +k(k+\delta+2m_\ell-1)
  \right]\notag\\
  &=
  C_{n,p}(m_\ell+k)(m_\ell+k+\delta-1).
  \label{eq:muellk}
\end{align}
By the definition of \(\cU_\ell\), the radial eigenfunction corresponding to \(Z_{\ell,k}\) is, up to the constant factor \(\sqrt{a}\),
\begin{equation}\label{eq:fellk}
  f_{\ell,k}(r)
  =
  r^{s_\ell}(1+r^a)^{-m_\ell}
  P_k^{(\alpha_\ell,\beta_\ell)}
  \left(\frac{1-r^a}{1+r^a}\right).
\end{equation}

\subsection{Completeness and absence of additional spectrum}

\begin{proposition}
\label{prop:radial-completeness}
For each \(\ell\geq 0\), the functions \(\{\mathcal{U}_\ell Z_{\ell,k}\}_{k\geq 0}\) form a complete orthogonal eigenbasis of \(T_\ell\) in \(\mathscr{K}_v\).  In particular,
\[
\spec(T_\ell)=\{\mu_{\ell,k}:k\in\mathbb{N}_0\},
\]
with every eigenvalue in this fixed angular block simple.
\end{proposition}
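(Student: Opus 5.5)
The plan is to transfer everything to the Jacobi side via the unitary map \(\mathcal{U}_\ell\) of \eqref{eq:unitary-U}, and then to use classical facts about Jacobi polynomials. By \cref{prop:exact-form-identity}, \(T_\ell\) is unitarily equivalent to the self-adjoint operator \(S_\ell\) in \(\mathscr{J}_\ell\) associated with the closure from \(\mathcal{G}_\ell\) of \(C_{n,p}(\mathfrak{j}_\ell+V_\ell)\). It therefore suffices to show that \(\{Z_{\ell,k}\}_{k\ge0}\) is a complete orthogonal eigenbasis of \(S_\ell\), with eigenvalues \(\mu_{\ell,k}\).

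\textbf{Step 1 (eigenvectors).} \Cref{prop:operator-domain-eigenfunctions} already gives \(Z_{\ell,k}\in\cD(S_\ell)\) with \(S_\ell Z_{\ell,k}=\mu_{\ell,k}Z_{\ell,k}\). Here \(\mu_{\ell,k}\) is as in \eqref{eq:muellk}.

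\textbf{Step 2 (orthogonality and completeness).} Orthogonality of the \(Z_{\ell,k}\) in \(\mathscr{J}_\ell\) is the classical orthogonality of \(P_k^{(\alpha,\beta)}(1-2t)\) with respect to \(t^{\alpha}(1-t)^{\beta}\) on \((0,1)\). This is valid since \(\alpha_\ell,\beta_\ell>-1\) by \Cref{lem:parameter-bounds}. It can also be read off from self-adjointness, since the eigenvalues \(\mu_{\ell,k}\) are distinct in \(k\).

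For completeness, I would use that polynomials are dense in \(L^2((0,1),\varpi_\ell\dd t)\). The measure is finite with compact support, so polynomials are dense in \(C[0,1]\) by Weierstrass, and \(C[0,1]\) is dense in \(L^2\) of a finite Borel measure. Each polynomial of degree \(\le K\) is a combination of \(Z_{\ell,0},\dots,Z_{\ell,K}\), because \(Z_{\ell,k}\) has exact degree \(k\). Hence the orthogonal family is complete in \(\mathscr{J}_\ell\).

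\textbf{Step 3 (no further spectrum, simplicity).} Suppose \(S_\ell\) is self-adjoint and has a complete orthonormal family of eigenvectors \(e_k\) with eigenvalues \(\mu_k\). Then \(S_\ell\) coincides with the diagonal operator: for \(u\in\cD(S_\ell)\),
\[
\langle S_\ell u,e_k\rangle=\langle u,S_\ell e_k\rangle=\mu_k\langle u,e_k\rangle .
\]
Its spectrum is therefore the closure of \(\{\mu_k\}\). The set \(\{\mu_k\}\) is already closed, since \(\mu_{\ell,k}\to\infty\). Any eigenvector for an eigenvalue \(\mu\) has coefficients supported on \(\{k:\mu_{\ell,k}=\mu\}\).

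Simplicity within the block follows because \(k\mapsto C_{n,p}(m_\ell+k)(m_\ell+k+\delta-1)\) is strictly increasing on \(\mathbb N_0\). Indeed, \(m_\ell>0\) and \(\delta-1>0\) give \(m_\ell+k+\delta-1>0\), so the function \(x\mapsto x(x+\delta-1)\) is increasing for \(x\ge m_\ell\). Here \(\delta-1=1+n(p-2)/p\), which is positive because \(n(2-p)<p\) is equivalent to \(p>2n/(n+1)\). That inequality does not hold for all \(p>1\), so I would avoid relying on it. Instead I would use that \(\delta-1+2m_\ell=\alpha_\ell+\beta_\ell+1>0\) by \Cref{lem:parameter-bounds}. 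Then
\[
\mu_{\ell,k+1}-\mu_{\ell,k}=C_{n,p}\bigl(2(m_\ell+k)+\delta\bigr)=C_{n,p}\bigl(\alpha_\ell+\beta_\ell+2+2k\bigr)>0 .
\]

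Transporting back through \(\mathcal{U}_\ell\) gives the statement. I expect the only delicate point to be the identification of \(S_\ell\) with the diagonal operator. It must be checked using the self-adjoint operator defined by the closed form, not the formal expression, and the computation in Step 3 handles this.
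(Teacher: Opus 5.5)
Your proposal is correct and follows essentially the same route as the paper. The common steps are completeness of the orthogonal shifted Jacobi polynomials in \(\mathscr{J}_\ell\) via Weierstrass density, the eigenfunction property from \cref{prop:operator-domain-eigenfunctions}, and the identification of a self-adjoint operator having a complete orthogonal eigenbasis with the corresponding diagonal operator. Your Step 3 computation shows that \(S_\ell\) is a restriction of that diagonal operator, and equality then holds because a self-adjoint operator has no proper self-adjoint extension. Your computation \(\mu_{\ell,k+1}-\mu_{\ell,k}=C_{n,p}(\alpha_\ell+\beta_\ell+2+2k)>0\) also supplies a proof of the simplicity claim, which the paper states without proof.
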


\begin{proof}
By \cref{lem:parameter-bounds}, the Jacobi weight \(\varpi_\ell=t^{\alpha_\ell}(1-t)^{\beta_\ell}\) is integrable on \((0,1)\).  The shifted Jacobi polynomials are mutually orthogonal in \(\mathscr{J}_\ell\).  Their linear span is the set of all polynomials. Continuous functions are dense in \(\mathscr{J}_\ell\), and the Weierstrass theorem makes polynomials uniformly dense in continuous functions; hence the Jacobi polynomials are complete in \(\mathscr{J}_\ell\).

\Cref{prop:operator-domain-eigenfunctions} shows that every element of this complete orthogonal family is an eigenfunction of the self-adjoint transformed operator.  A self-adjoint operator possessing a complete orthogonal eigenbasis is the diagonal operator determined by those eigenpairs.  Since \(\mathcal{U}_\ell\) is unitary, the same conclusion holds for \(T_\ell\).
\end{proof}

\section{Proof of Theorem \ref{thm:main-spectrum} and Corollary \ref{cor:E3-intro}}
\label{sec:E3}

\begin{proof}[Proof of \Cref{thm:main-spectrum}]
By \Cref{prop:angular-decomposition},
\begin{equation}
  \mathscr{H}_v
  \simeq
  \widehat{\bigoplus}_{\ell=0}^{\infty}
  \left(\mathscr{K}_v\otimes\cH_\ell\right),
  \quad
  \mathcal{L}_v
  \simeq
  \widehat{\bigoplus}_{\ell=0}^{\infty}
  \left(T_\ell\otimes I_{\cH_\ell}\right).
  \label{eq:main-proof-direct-sum}
\end{equation}
On the other hand, \cref{prop:radial-completeness} gives
\begin{equation}
  \spec(T_\ell)
  =
  \{\mu_{\ell,k}:k\in\mathbb{N}_0\},
  \quad
  \overline{\spanop\{
  \mathcal{U}_\ell Z_{\ell,k}:k\in\mathbb{N}_0
  \}}
  =
  \mathscr{K}_v,
  \label{eq:main-proof-radial-spectrum}
\end{equation}
where
\[
\mu_{\ell,k}
=
C_{n,p}(m_\ell+k)(m_\ell+k+\delta-1).
\]
Since
\[
Z_{\ell,k}(t)
=
P_k^{(\alpha_\ell,\beta_\ell)}(1-2t),
\quad
t=\frac{r^a}{1+r^a},
\]
we have
\[
1-2t
=
\frac{1-r^a}{1+r^a},
\]
and, up to the harmless normalization factor \(\sqrt{a}\),
\begin{align*}
  (\mathcal{U}_\ell Z_{\ell,k})(r)
  &=
  r^{s_\ell}(1+r^a)^{-m_\ell}
  P_k^{(\alpha_\ell,\beta_\ell)}
  \left(\frac{1-r^a}{1+r^a}\right).
\end{align*}
Combining this identity with
\eqref{eq:main-proof-direct-sum} and
\eqref{eq:main-proof-radial-spectrum} shows that the functions
\[
\Phi_{\ell,k,j}(r,\theta)
=
r^{s_\ell}(1+r^a)^{-m_\ell}
P_k^{(\alpha_\ell,\beta_\ell)}
\left(
\frac{1-r^a}{1+r^a}
\right)
Y_{\ell,j}(\theta),
\]
where \(\ell,k\in\mathbb{N}_0,\,1\leq j\leq d_{n,\ell}\) form a complete orthogonal family in \(\mathscr{H}_v\).  Their
eigenvalues are
\[
\mu_{\ell,k}
=
C_{n,p}(m_\ell+k)(m_\ell+k+\delta-1).
\]
For each fixed pair \((\ell,k)\), the span of
\[
\left\{
\Phi_{\ell,k,j}:
1\leq j\leq d_{n,\ell}
\right\}
\]
is precisely the subspace \(E_{\ell,k}\) defined in
\eqref{eq:separated-eigenspace}.  In particular,
\[
\dim E_{\ell,k}=d_{n,\ell}.
\]
For each fixed \((\ell,k)\), the separated eigenspace is isomorphic to \(\cH_\ell\), and therefore has dimension \(d_{n,\ell}\).

It remains to determine when two pairs give the same numerical eigenvalue.  Define
\begin{equation}
\label{eq:Lambda}
  \Lambda(\xi)
  \coloneq 
  C_{n,p}\xi(\xi+\delta-1).
\end{equation}
Since
\[
\lambda_\ell=\ell(\ell+n-2)
\]
is strictly increasing, the admissible roots \(s_\ell\) and \(\tau_\ell\), and hence
\[
m_\ell=\frac{p-1}{p}(s_\ell+\tau_\ell),
\]
are strictly increasing in \(\ell\).  Moreover,
\[
m_\ell\geq m_0=\frac{n-p}{p},
\]
and, for \(\xi\geq m_0\),
\begin{align}
  \Lambda'(\xi)
  =
  C_{n,p}(2\xi+\delta-1)\geq
  C_{n,p}(2m_0+\delta-1)
  =
  C_{n,p}(n-1)
  >0.
  \label{eq:Lambda-derivative-at-m0}
\end{align}
Thus \(\Lambda\) is strictly increasing on \([m_0,\infty)\).  Since
\[
\mu_{\ell,k}=\Lambda(m_\ell+k),
\]
it follows that
\begin{align*}
  \mu_{\ell,k}=\mu_{\ell',k'}
  &\iff
  \Lambda(m_\ell+k)=\Lambda(m_{\ell'}+k')\\
  &\iff
  m_\ell+k=m_{\ell'}+k',
\end{align*}
which proves \eqref{eq:crossing-rule-intro}.

Finally,
\[
m_\ell\to \infty
\quad(\ell\to\infty),
\]
and
\[
\Lambda(\xi)\to \infty
\quad(\xi\to\infty).
\]
Hence, for every \(M>0\),
\[
\#\left\{
(\ell,k)\in\mathbb{N}_0^2:\mu_{\ell,k}\leq M
\right\}
<\infty.
\]
Therefore the spectrum is discrete and every eigenvalue has finite
multiplicity.  More precisely, for every
\(\mu\in\spec(\mathcal{L}_v)\),
\[
\ker(\mathcal{L}_v-\mu I)
=
\bigoplus_{\substack{\ell,k\in\mathbb{N}_0\\
                     \mu_{\ell,k}=\mu}}
E_{\ell,k},
\]
where the sum is orthogonal.  Consequently,
\[
\dim\ker(\mathcal{L}_v-\mu I)
=
\sum_{\substack{\ell,k\in\mathbb{N}_0\\
                 \mu_{\ell,k}=\mu}}
d_{n,\ell}.
\]
This proves all assertions.
\end{proof}

We now assume \(1<p<2\).  First observe that the \(\ell=1\) indicial roots are explicit:
\begin{equation}\label{eq:l1-roots}
  s_1=\frac{1}{P},
  \quad
  \tau_1=\frac{n-1}{P}.
\end{equation}
Indeed, these values solve \eqref{eq:origin-indicial-simplified} and \eqref{eq:infinity-indicial} with \(\lambda_1=n-1\). Consequently,
\begin{equation}\label{eq:m0m1}
  m_0=\frac{n-p}{p},
  \quad
  m_1=\frac{n}{p}=m_0+1.
\end{equation}
Thus the first spectral index is \(m_0\), represented by pair \((0,0)\), and the next one is \(m_0+1=m_1\), represented by the pairs \((0,1)\) and \((1,0)\).

Because \(\ell\mapsto m_\ell\) is strictly increasing, the third distinct spectral index is
\[
\min\{m_2,m_0+2(=m_1+1)\}.
\]

\begin{proposition}
\label{prop:m2-strict}
If \(n\geq 3\) and \(1<p<2\), then
\begin{equation}\label{eq:m2-inequality}
  m_1<m_2<m_0+2.
\end{equation}
If \(p=2\) and \(n\geq 3\), then \(m_2=m_0+2\).
\end{proposition}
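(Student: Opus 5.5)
The plan is to reduce \eqref{eq:m2-inequality} to an elementary inequality between square roots, viewed as a function of \(P=p-1\in(0,1]\), and to settle it by a concavity argument anchored at the equality case \(p=2\).

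\textbf{Step 1: the lower bound.} The inequality \(m_1<m_2\) is immediate from the strict monotonicity of \(\ell\mapsto m_\ell\), which was already used in the proof of \Cref{thm:main-spectrum}. Recall that \(m_0+2=m_1+1\) by \eqref{eq:m0m1}, so everything reduces to comparing \(m_2\) with \(m_0+2=(n+p)/p\).

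\textbf{Step 2: a closed formula for \(m_\ell\).} By \eqref{eq:sell}, \eqref{eq:tauell} and \eqref{eq:mell},
\[
m_\ell=\frac{1}{2p}\Bigl[-D+(n-p)+\sqrt{D^2+4P\lambda_\ell}+\sqrt{(n-p)^2+4P\lambda_\ell}\Bigr].
\]
Since \(-D+(n-p)=n(2-p)\) and \(\lambda_2=2n\), the upper bound \(m_2<(n+p)/p\) is equivalent to
\[
F(P)\coloneq \sqrt{Q_1(P)}+\sqrt{Q_2(P)}-(P+1)(n+2)<0,
\]
where
\[
Q_1(P)=((n-1)P-1)^2+8nP,\qquad Q_2(P)=(n-1-P)^2+8nP .
\]
Here I use \(D=(n-1)P-1\) and \(n-p=n-1-P\).

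\textbf{Step 3: the case \(p=2\).} At \(P=1\) one checks directly that \(Q_1(1)=Q_2(1)=(n+2)^2\). Hence \(F(1)=0\), which is the claim \(m_2=m_0+2\) for \(p=2\).

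\textbf{Step 4: concavity.} For a positive quadratic \(Q=aP^2+bP+c\), one has
\[
\bigl(\sqrt Q\bigr)''=\frac{4ac-b^2}{4Q^{3/2}}.
\]
For \(Q_1\) the coefficients are \(a=(n-1)^2\), \(b=6n+2\), \(c=1\). For \(Q_2\) they are \(a=1\), \(b=6n+2\), \(c=(n-1)^2\). In both cases \(4ac-b^2=4(n-1)^2-(6n+2)^2<0\). So both square roots are strictly concave on \([0,1]\), both quadratics being positive there. Since the last term of \(F\) is affine, \(F\) is strictly concave.

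\textbf{Step 5: the derivative at \(P=1\).} Differentiating gives
\[
(\sqrt{Q_1})'(1)=\frac{(n-2)(n-1)+4n}{n+2}=\frac{n^2+n+2}{n+2},\qquad (\sqrt{Q_2})'(1)=\frac{-(n-2)+4n}{n+2}=\frac{3n+2}{n+2}.
\]
These sum to \((n^2+4n+4)/(n+2)=n+2\), so \(F'(1)=0\). Strict concavity then gives \(F'(P)>F'(1)=0\) for \(P\in(0,1)\). Thus \(F\) is strictly increasing on \((0,1]\), and \(F(P)<F(1)=0\) for \(1<p<2\), which proves \(m_2<m_0+2\).

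The main obstacle is finding an argument that is sharp at \(p=2\). Cruder approaches fail: bounding each square root by its chord in \(P\) fails, and the bound \(\sqrt x+\sqrt y\le\sqrt{2(x+y)}\) fails for large \(n\). The concavity argument works only because the first derivative of \(F\) also vanishes at \(P=1\). So the delicate point is the exact computation of \(F'(1)=0\), together with the sign of \(4ac-b^2\) for both quadratics.
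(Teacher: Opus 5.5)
Your proof is correct, but it establishes the key inequality \(\sqrt{Q_1}+\sqrt{Q_2}<p(n+2)\) by a different route from the paper's. The paper squares twice. It first checks that \(p^2(n+2)^2-Q_1-Q_2=(6n+2)P^2+2(n^2-2n+2)P+6n+2>0\), and then reduces the claim to the quartic identity \([(6n+2)P^2+2(n^2-2n+2)P+6n+2]^2-4Q_1Q_2=32n(n+1)(1-P^2)^2\), whose verification it leaves as a direct expansion.

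You instead treat the left side as a function of \(P\) and need only two short facts. First, \(4ac-b^2=-32n(n+1)<0\) for both quadratics, which gives strict concavity; up to sign, this is the same constant as in the paper's identity. Second, \(F(1)=F'(1)=0\). The double zero of \(F\) at \(P=1\) is exactly the factor \((1-P)^2\) in the paper's identity, so both proofs rest on the same structure. Your version makes the tangency at \(p=2\) transparent and avoids the quartic expansion. The paper's version instead gives a closed algebraic formula for the gap after squaring.

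Your argument also proves more than you state. Since \(Q_1\) and \(Q_2\) have positive coefficients, \(F\) is strictly concave on all of \([0,\infty)\). It therefore lies strictly below its horizontal tangent at \(P=1\), so \(m_2<m_0+2\) holds for every \(p\neq 2\) with \(1<p<n\), not only for \(1<p<2\). The paper's identity gives the same extension, since \((1-P^2)^2>0\) for \(P\neq 1\).
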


\begin{proof}
Since
\[
\lambda_2=2n>\lambda_1=n-1
\]
and both admissible indicial roots are strictly increasing in \(\lambda_\ell\), we have
\[
m_1<m_2.
\]

Set
\[
\Delta_s\coloneq \sqrt{D^2+8nP},
\quad
\Delta_\tau\coloneq \sqrt{(n-p)^2+8nP}.
\]
From \eqref{eq:sell}--\eqref{eq:mell},
\begin{equation}
\label{eq:m2-difference}
  m_2-(m_0+2)
  =
  \frac{\Delta_s+\Delta_\tau-p(n+2)}{2p}.
\end{equation}
Thus it remains to prove
\[
\Delta_s+\Delta_\tau<p(n+2).
\]

Since \(p=P+1\), direct expansion gives
\begin{align*}
  \Delta_s^2
  &=
  \left((n-1)P-1\right)^2+8nP\\
  &=
  (n-1)^2P^2+(6n+2)P+1,
  \\
  \Delta_\tau^2
  &=
  (n-1-P)^2+8nP\\
  &=
  P^2+(6n+2)P+(n-1)^2.
\end{align*}
Consequently,
\begin{align}
  &p^2(n+2)^2-\Delta_s^2-\Delta_\tau^2
  \notag\\
  &\quad=
  (6n+2)P^2
  +
  2\left(n^2-2n+2\right)P+6n+2
  >0.
  \label{eq:first-squared-difference}
\end{align}

All quantities involved are positive.  Hence
\begin{align*}
  \Delta_s+\Delta_\tau<p(n+2)
  &\iff
  (\Delta_s+\Delta_\tau)^2<p^2(n+2)^2\\
  &\iff
  2\Delta_s\Delta_\tau
  <
  p^2(n+2)^2-\Delta_s^2-\Delta_\tau^2\\
  &\iff
  \begin{aligned}[t]
    4\Delta_s^2\Delta_\tau^2
    <\left[(6n+2)P^2+2\left(n^2-2n+2\right)P+6n+2
      \right]^2.
  \end{aligned}
\end{align*}
A direct calculation gives
\begin{align*}
  \left[
  (6n+2)P^2
  +
  2\left(n^2-2n+2\right)P+6n+2
  \right]^2
  -
  4\Delta_s^2\Delta_\tau^2
  =
  32n(n+1)(1-P^2)^2.
\end{align*}
For \(1<p<2\), one has \(0<P<1\), and therefore
\[
32n(n+1)(1-P^2)^2>0.
\]
The preceding equivalences yield
\[
\Delta_s+\Delta_\tau<p(n+2).
\]
By \eqref{eq:m2-difference},
\[
m_2-(m_0+2)<0,
\]
which proves
\[
m_1<m_2<m_0+2.
\]

If \(p=2\), then \(P=1\), and the above formulas give
\[
\Delta_s^2=\Delta_\tau^2=(n+2)^2.
\]
Thus
\[
\Delta_s+\Delta_\tau=2(n+2)=p(n+2),
\]
and \eqref{eq:m2-difference} yields
\begin{equation}\label{eq:m2m0}
  m_2=m_0+2.
\end{equation}
\end{proof}

\begin{proof}[Proof of \cref{cor:E3-intro}]
The strict monotonicity of \(\Lambda\) from \eqref{eq:Lambda-derivative-at-m0}, the list of the first three spectral indices, and \cref{prop:m2-strict} show that the third distinct eigenvalue is represented only by \((\ell,k)=(2,0)\) when \(1<p<2\). Since \(P_0^{(\alpha_2,\beta_2)}=1\), formulas \eqref{eq:muellk} and \eqref{eq:fellk} give \eqref{eq:mu3-intro} and \eqref{eq:E3-intro}.  Finally,
\[
d_{n,2}
=
\binom{n+1}{2}-1
=
\frac{(n-1)(n+2)}{2}.
\]
\end{proof}

\begin{remark}
For \(p=2\), \eqref{eq:m2m0} shows that the third spectral index \(m_0+2\) is represented by \((2,0)\), \((1,1)\), and \((0,2)\).  Since
\[
\dim\mathcal{H}_2=\frac{(n-1)(n+2)}{2},
\quad
\dim\mathcal{H}_1=n,
\quad
\dim\mathcal{H}_0=1,
\]
the corresponding eigenspace has dimension
\[
\frac{(n-1)(n+2)}{2}+n+1
=
\frac{n(n+3)}{2}.
\]  
\end{remark}

\end{document}